\documentclass[12pt]{article}
\usepackage{graphicx}
\usepackage{amsmath, amssymb, amsthm}
\usepackage{mathrsfs}
\usepackage{geometry}
\usepackage{cite}
\usepackage[colorlinks=true]{hyperref}
\usepackage{authblk}

\title{Classify all representation which contains a Steinberg in its hyperspecial subgroup}
\author{Runze Wang}

\date{Peking University, Beijing, China; e-mail:runze\_wang@stu.pku.edu.cn}

\usepackage{setspace}
\newtheorem{theorem}{Theorem}[section]
\newtheorem{lemma}[theorem]{Lemma}
\newtheorem{corollary}[theorem]{Corollary}
\newtheorem{definition}[theorem]{Definition}

\begin{document}

\maketitle

\begin{abstract}

This paper addresses Question 1 posed by Dipendra Prasad in his recent problem list~\cite{Prasad2025}: classify all irreducible smooth representations $\pi$ of an unramified reductive $p$-adic group $G$ (over a characteristic zero local field) such that the space of $K_+$-fixed vectors $V^{K_+}$ (viewed as a representation of the finite reductive group $K/K_+$) contains the Steinberg representation. 
Prasad initially expected that such representations should be twists of the Steinberg representation or generic unramified representations. In a subsequent private communication, he refined the expectation: they should be precisely the unique generic irreducible subquotients of unramified principal series. 
We prove that any such $\pi$ must be Iwahori-spherical, hence a subquotient of some unramified principal series $I(\chi)=\operatorname{Ind}_B^G\chi$. 
By a detailed analysis of the Iwahori-Hecke algebra action on the Iwahori-fixed space $V^I$, we show that each principal series $I(\chi)$ contains \emph{exactly one} irreducible subquotient whose $K_+$-fixed vectors contain the Steinberg representation, and we give an explicit construction of this subquotient. 
Consequently, we obtain a bijection between $W$-orbits of unramified characters and irreducible representations containing the Steinberg representation in their hyperspecial subgroup. 
The proof is self-contained and uses only elementary Hecke algebra methods. In particular, the reduction from the $K$-action on $K_+$-fixed vectors to modules over the finite Hecke algebra $H(K,I)$ is spelled out in detail, a step that has not been made explicit in the literature.
\end{abstract}

\section{Introduction}
Let $F$ be a non-archimedean local field of characteristic zero, $\mathcal{O}_F$ its ring of integers, $\mathbb{F}_q$ its residue field, and let $G$ be a connected reductive group defined over $F$ which is \textbf{unramified}, i.e., quasi-split and split over an unramified extension of $F$. 
Denote by $K = G(\mathcal{O}_F)$ a hyperspecial maximal compact subgroup of $G(F)$ and by $K_+$ its pro-unipotent radical; then $K/K_+ \simeq G(\mathbb{F}_q)$ is a finite reductive group. Through inflation, representations of $G(\mathbb{F}_q)$ may be viewed as representations of $K$.

In a recent discussion meeting at IISER Bhopal (July 2025), Dipendra Prasad proposed a list of accessible open problems in representation theory~\cite{Prasad2025}. The first problem reads:

\begin{quote}
Classify all irreducible representations of $G(F)$ containing the Steinberg representation of $G(\mathbb{F}_q)$ as a $K$-subrepresentation (i.e., such that $V^{K_+}$ contains the Steinberg representation of $K/K_+$ as a constituent).
\end{quote}

Prasad initially suggested that such representations should fall into two families: (1) twists of the Steinberg representation of $G(F)$; (2) generic unramified representations (i.e., irreducible spherical principal series). 
In a subsequent private communication, he refined this expectation: the representations in question should be precisely the \textbf{unique generic irreducible subquotients} of unramified principal series.

The present paper confirms this refined expectation and gives a complete classification. Our main results are as follows.

\begin{itemize}
    \item \textbf{Existence and uniqueness.} 
    For every unramified principal series $I(\chi) = \operatorname{Ind}_B^G\chi$ (where $B$ is a Borel subgroup and $\chi$ an unramified character of a maximal torus), there exists \emph{exactly one} irreducible subquotient whose $K_+$-fixed vectors contain the Steinberg representation. 
    We give an explicit construction of this subquotient via the Iwahori-Hecke algebra.
    \item \textbf{Parametrization.} 
    The construction yields a bijection between $W$-orbits of unramified characters (where $W$ is the relative Weyl group) and irreducible representations of $G(F)$ whose $K_+$-fixed vectors contain the Steinberg representation.
\end{itemize}

Our proof is self-contained and uses only elementary methods from the theory of Iwahori-Hecke algebras. The main steps are:
\begin{enumerate}
    \item Show that any representation satisfying Prasad's condition must be Iwahori-spherical (hence a subquotient of some $I(\chi)$) using the Moy–Prasad theory of depth-zero representations and the Borel–Casselman theorem.
    \item Reduce the problem to an eigenvalue problem on the Iwahori-fixed space $V^I$ by comparing two actions on $V^I$: one coming from the finite Hecke algebra $H(K,I)$ of the $p$-adic group, and the other from the Iwahori–Hecke algebra of the finite Lie group $K/K_+$. We prove that these two actions coincide (Lemma~6.2), a step that has not been spelled out clearly in the literature.
    \item Solve the eigenvalue problem using Casselman's basis $\{\phi_{w,\chi}\}$ of $I(\chi)^I$ and Lansky's formulas for the action of affine reflections. The unique (up to scalar) solution is given explicitly by
    \[
    v_0 = \sum_{w\in W} (-1)^{\ell(w)} [IwI:I]^{-1} \phi_{w,\chi}.
    \]
    \item Show that the subrepresentation generated by $v_0$ contains a unique irreducible subquotient with the desired property. And we also show that this subquotient is the unique generic subquotient in a unramified principal series.
\end{enumerate}

\subsection*{Relation to previous work}
After the first version of this paper appeared on arXiv, we were informed by Manish Mishra and Maarten Solleveld that related results exist in the literature. 
We are very grateful to them for their careful reading and for pointing out the connections.

\begin{itemize}
    \item Manish Mishra~\cite{Mishra2016} has studied generic representations in $L$-packets. 
    In Theorem~12 of~\cite{Mishra2016}, he proved the existence and uniqueness theorem under the framework of unramified L-packets.
    In the split case, Theorem~3 of his joint work with Basudev Pattanayak~\cite{MishraPattanayak} gives an explicit Hecke-algebra model for the unique generic constituent of an unramified principal series.
   These results, which are part of a more general framework for $L$ packages, imply our classification in a broader context.
    \item Maarten Solleveld~\cite{Solleveld2012} developed a comprehensive theory of principal series representations for quasi-split reductive $p$-adic groups. 
    In Theorem~3.4 of~\cite{Solleveld2012}, he established that the condition used by Prasad is equivalent to the representation being generic.(To see that one has to know as well the reduction steps from the K-action on K+-invariant vector to modules over H(K,I), which is contained in this article. )
    His work, which is much more general, provides a conceptual explanation for our results.
\end{itemize}

Professor Solleveld also kindly pointed out that the reduction steps from the $K$-action on $K_+$-fixed vectors to modules over $H(K,I)$—which we work out in detail—are not explicitly written elsewhere, and that this part of our work can be regarded as an original contribution. 
We are deeply indebted to them for their generosity and for helping us locate our work within the larger framework.

\subsection*{Original contributions of this paper}
While the existence and uniqueness of generic subquotients of unramified principal series are known from general theory, our work provides the following new elements:
\begin{enumerate}
    \item \textbf{Explicit construction.} We give an explicit formula for a basis vector $v_0$ in the Iwahori-fixed space that generates the desired subquotient. This explicit description is new and may be useful for further computations .
    \item \textbf{Self-contained elementary proof.} Our proof uses only Hecke algebra calculations and does not rely on the more advanced machinery of $L$-packets or the local Langlands correspondence.
    \item \textbf{Detailed reduction steps.} The comparison between the finite Hecke algebra $H(K,I)$ and the Iwahori–Hecke algebra of the finite Lie group $K/K_+$ (Lemma~6.2) is made completely explicit. As noted by Solleveld, this step, has not been written out clearly in the literature.

\end{enumerate}

\subsection*{Organization of the paper}
Section~2 recalls the Moy–Prasad filtration and the theory of unrefined minimal $K$-types. 
Section~3 analyzes the cuspidal support of the irreducible components of $V^{K_+}$ and proves that any representation containing the Steinberg representation must be Iwahori-spherical. 
Section~4 collects necessary facts about unramified principal series, including Casselman's basis and Lansky's formulas for the action of affine reflections. 
Section~5 reviews the representation theory of finite Lie groups, focusing on the principal series and the associated Iwahori–Hecke algebra. 
Section~6 establishes an isomorphism between the finite Hecke algebra of the finite group and the finite Hecke algebra $H(K,I)$ of the $p$-adic group, and proves that their actions on $V^I$ coincide. 
Section~7 contains the core lemma and the proof of the main theorem, including the explicit construction of the subquotient. 
Finally, we include a corollary giving a bijection between $W$-orbits of unramified characters and the classified representations.
Section 8 provides  readers with another view of this result.

\section{Unrefined minimal $K$-types and the Moy–Prasad filtration}

In this section we recall the notion of Moy–Prasad filtrations and unrefined minimal $K$-types for a reductive $p$-adic group, following \cite{MoyPrasad1994, MoyPrasad1996}. Let $k$ be a non-archimedean local field of characteristic zero, and let $G$ be a connected reductive group defined over $k$. Let $\mathfrak{g}$ be the Lie algebra of $G$ and $\mathfrak{g}^{*}$ its dual.

\subsection{Moy–Prasad filtrations}

For a point $x$ in the Bruhat–Tits building of $G/k$, let $\mathcal{P}_x = \mathcal{P}_x \cap G(k)$ be the corresponding parahoric subgroup of $G(k)$. Moy and Prasad associate to $x$ a decreasing filtration $\{\mathcal{P}_{x,r}\}_{r\ge 0}$ of $\mathcal{P}_x$ by open compact subgroups, defined as follows: for a non-negative real number $r$, let $\mathcal{P}_{x,r}$ be the subgroup generated by certain congruence subgroups of the maximal torus and by the root subgroups $U_\psi$ for all affine roots $\psi$ satisfying $\psi(x) \ge r$; see \cite[2.6]{MoyPrasad1994} for the precise definition. One sets $\mathcal{P}_{x,0} = \mathcal{P}_x$ and $\mathcal{P}_{x,r+} = \bigcup_{s>r} \mathcal{P}_{x,s}$. For $r>0$ the quotients $\mathcal{P}_{x,r}/\mathcal{P}_{x,r+}$ are abelian.

There is an associated filtration of the Lie algebra $\mathfrak{g}$. Let $\mathfrak{g}_{x,r}$ be the $\mathfrak{o}$-lattice in $\mathfrak{g}$ obtained from the smooth $\mathfrak{o}$-group scheme corresponding to $\mathcal{P}_x$; its definition is given in \cite[3.2]{MoyPrasad1996}. Similarly, one defines a filtration $\{\mathfrak{g}_{x,r}^*\}_{r\in\mathbb{R}}$ of the dual $\mathfrak{g}^*$ by
$$
\mathfrak{g}_{x,r}^* = \{ X\in \mathfrak{g}^* \mid X(Y)\in \mathfrak{o} \text{ for all } Y\in \mathfrak{g}_{x,-r+} \},
$$
see \cite[3.5]{MoyPrasad1994}. The pairing
$$
\mathfrak{g}_{x,-r}^*/\mathfrak{g}_{x,(-r)+}^* \times \mathfrak{g}_{x,r}/\mathfrak{g}_{x,r+} \to \mathfrak{f}
$$
is non-degenerate and $\mathbf{M}_x(\mathfrak{f})$-equivariant, where $\mathbf{M}_x$ is the reductive quotient of the special fibre of the integral model attached to $x$; cf. \cite[3.7]{MoyPrasad1996}.

For a character $\chi$ of $\mathcal{P}_{x,r}$ trivial on $\mathcal{P}_{x,r+}$ (with $r>0$), one can realize $\chi$ as a character of the abelian group $\mathcal{P}_{x,r}/\mathcal{P}_{x,r+}$. Via the isomorphism $\mathcal{P}_{x,r}/\mathcal{P}_{x,r+}\cong \mathfrak{g}_{x,r}/\mathfrak{g}_{x,r+}$ of \cite[3.8]{MoyPrasad1996} and the above duality, such a character corresponds to a coset
$$
X + \mathfrak{g}_{x,-r+}^* \in \mathfrak{g}_{x,-r}^*/\mathfrak{g}_{x,-r+}^*.
$$

\begin{definition}  \cite{MoyPrasad1994}
\label{def:nondeg}
Let $r>0$. A character $\chi$ of $\mathcal{P}_{x,r}$ trivial on $\mathcal{P}_{x,r+}$ is called \emph{non-degenerate} if the corresponding coset $X + \mathfrak{g}_{x,-r+}^*$ contains no nilpotent elements.
\end{definition}

\subsection{Unrefined minimal $K$-types}

\begin{definition}[\cite{MoyPrasad1994}, 5.1]
An \emph{unrefined minimal $K$-type} is a pair $(\mathcal{P}_{x,r},\chi)$ where $x$ is a point of the Bruhat–Tits building of $G/k$, $r$ is a non-negative real number, and $\chi$ is a representation of $\mathcal{P}_{x,r}$ trivial on $\mathcal{P}_{x,r+}$ such that:
\begin{itemize}
    \item[(i)] if $r = 0$, then $\chi$ is a cuspidal representation of $\mathcal{P}_{x,0}/\mathcal{P}_{x,0+}$, inflated to $\mathcal{P}_x$;
    \item[(ii)] if $r > 0$, then $\mathcal{P}_{x,r} \neq \mathcal{P}_{x,r+}$ and $\chi$ is a non-degenerate character of the abelian group $\mathcal{P}_{x,r}/\mathcal{P}_{x,r+}$.
\end{itemize}
The number $r$ is called the \emph{depth} of the minimal $K$-type.
\end{definition}

\begin{definition}[\cite{MoyPrasad1996}]
\label{def:associate}
Two unrefined minimal $K$-types $(\mathcal{P}_{x,r},\chi)$ and $(\mathcal{P}_{y,s},\xi)$ are said to be \emph{associates} if they have the same depth $r = s$ and the following conditions hold:
\begin{itemize}
    \item[(i)] Zero depth case: $r = s = 0$. Then there exists an element $g\in G(k)$ such that $\mathcal{P}_x \cap g\mathcal{P}_y g^{-1}$ surjects onto both $\mathbf{M}_x(\mathfrak{f})$ and $\mathbf{M}_{gy}(\mathfrak{f})$, and the representation $\chi$ is isomorphic to $\operatorname{Ad}(g)\xi$.
    \item[(ii)] Positive depth case: $r = s > 0$. Let $X + \mathfrak{g}_{x,-r+}^*$ be the coset representing $\chi$ and $Y + \mathfrak{g}_{y,-r+}^*$ the coset representing $\xi$. Then the $G(k)$-orbit of $X + \mathfrak{g}_{x,-r+}^*$ intersects $Y + \mathfrak{g}_{y,-r+}^*$.
\end{itemize}
\end{definition}

\subsection{The main theorem on existence and depth}

The following fundamental result establishes the existence of an unrefined minimal $K$-type in any irreducible admissible representation, the uniqueness of its depth, and the associativity of all minimal $K$-types occurring in the same representation.

\begin{theorem}[\cite{MoyPrasad1994}]
\label{thm:existence}
Assume that $k$ is of characteristic zero. Let $(\pi,V_\pi)$ be an irreducible admissible complex representation of $G(k)$. Then there exists a non-negative rational number $\rho(\pi)$ with the following properties:
\begin{enumerate}
    \item[(1)] For some point $x$ in the Bruhat–Tits building of $G/k$, the space $V_\pi^{\mathcal{P}_{x,\rho(\pi)+}}$ is non-zero, and $\rho(\pi)$ is the smallest number with this property.
    \item[(2)] For any point $y$ in the building, if $W = V_\pi^{\mathcal{P}_{y,\rho(\pi)+}} \neq \{0\}$, then:
    \begin{itemize}
        \item[(i)] if $\rho(\pi) = 0$, any irreducible $\mathcal{P}_{y,0}$-submodule of $W$ contains a minimal $K$-type of depth zero of a parahoric subgroup $\mathcal{Q} \subset \mathcal{P}_y$;
        \item[(ii)] if $\rho(\pi) > 0$, any irreducible $\mathcal{P}_{y,\rho(\pi)}$-submodule of $W$ is a minimal $K$-type.
    \end{itemize}
\end{enumerate}
Moreover, any two minimal $K$-types contained in $\pi$ are associates of each other.
\end{theorem}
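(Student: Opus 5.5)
The plan is to follow the Moy–Prasad strategy. The argument rests on three ingredients: the structure of the quotients $\mathcal{P}_{x,r}/\mathcal{P}_{x,r+}$, the duality with $\mathfrak{g}^*_{x,-r}/\mathfrak{g}^*_{x,-r+}$, and the Bruhat–Tits building. Since the paper quotes Theorem~\ref{thm:existence} from \cite{MoyPrasad1994}, in practice we would simply invoke it. Below is the route one would take to prove it.

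\textbf{Step 1: definition and rationality of $\rho(\pi)$.} Smoothness of $\pi$ gives some $x$ and $r$ with $V_\pi^{\mathcal{P}_{x,r+}}\neq 0$, because the groups $\mathcal{P}_{x,r}$ form a neighbourhood basis of the identity. Let $\rho(\pi)$ be the infimum over all pairs $(x,r)$ with this property. The jumps of the filtrations lie in a discrete set determined by the affine roots, which for rational points $x$ is $\tfrac{1}{N}\mathbb{Z}$. Moreover, every $\mathcal{P}_{x,r+}$ contains some $\mathcal{P}_{y,r'+}$ with $y$ a vertex of a suitable barycentric refinement and $r'$ in this discrete set. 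Together these facts show that the infimum is attained at some $x$ and is rational, which gives (1).

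\textbf{Step 2: minimality yields non-degeneracy or cuspidality.} Suppose $\rho=\rho(\pi)>0$ and $W=V^{\mathcal{P}_{y,\rho+}}\neq0$. Then $\mathcal{P}_{y,\rho}$ acts on $W$ through the abelian quotient $\mathcal{P}_{y,\rho}/\mathcal{P}_{y,\rho+}$, so $W$ decomposes into characters $\chi$, each represented by a coset $X+\mathfrak{g}^*_{y,-\rho+}$. Suppose some such coset contained a nilpotent element $X$. Using the Kempf–Rousseau/DeBacker type lemma for nilpotent elements, one finds a point $z$ near $y$ and some $\epsilon>0$ with
\[
\mathcal{P}_{z,(\rho-\epsilon)+}\subset \mathcal{P}_{y,\rho+}\cdot\ker\chi .
\]
The $\chi$-isotypic vectors would then be fixed by $\mathcal{P}_{z,(\rho-\epsilon)+}$, contradicting the minimality of $\rho$. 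This proves (2)(ii).

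For $\rho=0$, the space $W$ is a representation of the finite reductive group $\mathbf{M}_y(\mathfrak{f})$. Take an irreducible constituent and write it, by Harish-Chandra theory, as a constituent of a parabolic induction from a cuspidal representation of a Levi subgroup $\mathbf{M}$. Lift the corresponding parabolic to a parahoric $\mathcal{Q}\subset\mathcal{P}_y$. Its pro-unipotent radical $\mathcal{Q}_{0+}$ is the preimage of the unipotent radical, and its reductive quotient is $\mathbf{M}$. Taking invariants under $\mathcal{Q}_{0+}$ then produces the cuspidal type on $\mathcal{Q}$, which gives (2)(i).

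\textbf{Step 3: associativity.} This is the hardest step. Given two minimal $K$-types $(\mathcal{P}_{x,r},\chi)$ and $(\mathcal{P}_{y,r},\xi)$ occurring in $\pi$, irreducibility gives a nonzero intertwiner, so some $g$ intertwines $\chi$ with $\xi$. One then moves $gy$ along the geodesic to $x$ inside an apartment containing both points. Repeated use of the Iwahori-type decomposition
\[
\mathcal{P}_{x,r}=(\mathcal{P}_{x,r}\cap \mathcal{P}_{gy,r})\,\mathcal{P}_{x,r+}
\]
along the segment allows the intertwining to be reduced to the statement that the cosets $X+\mathfrak{g}^*_{x,-r+}$ and $\mathrm{Ad}^*(g)Y+\mathfrak{g}^*_{gy,-r+}$ intersect. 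The delicate point is controlling the filtration jumps along the segment, and it is there that non-degeneracy is used to rule out nilpotent intersections.

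In depth zero the argument is analogous. One uses the Bruhat decomposition of $\mathcal{P}_x\backslash G/\mathcal{P}_y$, together with the fact that cuspidal representations vanish on the unipotent radicals of proper parabolics. These force a double coset representative $g$ for which $\mathcal{P}_x\cap g\mathcal{P}_yg^{-1}$ surjects onto both reductive quotients, as Definition~\ref{def:associate}(i) requires.
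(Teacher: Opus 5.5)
The paper gives no proof of this theorem; it simply quotes \cite{MoyPrasad1994}, so invoking the citation, as you say you would, is what the paper does. Read as an actual proof, your Steps~1--2 follow the Moy--Prasad outline, but Step~3 fails as written. The displayed identity $\mathcal P_{x,r}=(\mathcal P_{x,r}\cap\mathcal P_{gy,r})\,\mathcal P_{x,r+}$ is false even for points arbitrarily close to $x$, because root groups $U_\psi$ with $\psi(x)=r$ and $\psi$ decreasing toward the other point are lost. For instance, take $\mathrm{SL}_2$ with $r=1$, let $x$ be the vertex with $\mathcal P_x=\mathrm{SL}_2(\mathfrak o)$, and let $z$ be any point of the open chamber of the Iwahori subgroup that is upper triangular mod $\mathfrak p$. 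Then every element of $(\mathcal P_{x,1}\cap\mathcal P_{z,1})\,\mathcal P_{x,1+}$ (here $\mathcal P_{x,1+}=\mathcal P_{x,2}$) has lower-left entry in $\mathfrak p^2$, whereas $\mathcal P_{x,1}$ only requires $\mathfrak p$.

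Nothing of this kind is needed for types of equal depth. Put $x$ and $gy$ in one apartment; then $\chi$ and ${}^g\xi$ agree on $\mathcal P_{x,r}\cap\mathcal P_{gy,r}$. This group has an Iwahori factorization compatible with the Moy--Prasad isomorphisms at both points, so $X-\mathrm{Ad}^*(g)Y$ is integral on $\mathfrak g_{x,r}\cap\mathfrak g_{gy,r}$. The dual of an intersection of lattices is the sum of the duals, so $X-\mathrm{Ad}^*(g)Y\in\mathfrak g^*_{x,(-r)+}+\mathfrak g^*_{gy,(-r)+}$, i.e.\ the two cosets meet. Non-degeneracy plays no role here.

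The real gap is that Step~3 starts from two types of the \emph{same} depth $r$. Being associates requires equal depth, so you must show that every minimal $K$-type $(\mathcal P_{x,r},\chi)$ contained in $\pi$ has $r=\rho(\pi)$, and this is where non-degeneracy is actually used. Clearly $r\ge\rho(\pi)$. Suppose $r>\rho(\pi)$ and take $z$ with $V^{\mathcal P_{z,\rho(\pi)+}}\neq0$. Then $\mathcal P_{z,r}\subseteq\mathcal P_{z,\rho(\pi)+}$ acts trivially on these vectors, and the argument above with $\xi=1$ gives some $X'\in(X+\mathfrak g^*_{x,(-r)+})\cap\mathfrak g^*_{gz,(-r)+}$. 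Split $X'$ into torus and root components in a common apartment. The torus part, and every component of depth $>-r$ at $x$, already lie in $\mathfrak g^*_{x,(-r)+}$. The remaining components have depth exactly $-r$ at $x$ and $>-r$ at $gz$, so a cocharacter approximating the direction from $x$ to $gz$ contracts their sum to $0$. That sum is therefore a nilpotent element of $X+\mathfrak g^*_{x,(-r)+}$, contradicting non-degeneracy. This is where your remark about non-degeneracy ruling out nilpotent intersections belongs.

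A smaller point: in Step~1 the inclusion you state is only useful with $r'\le r$. A cleaner route starts from the fact that $\psi(x)>r$ is an open condition, so $\mathcal P_{x,r+}\subseteq\mathcal P_{x',r'+}$ for $(x',r')$ near $(x,r)$. Hence $\{(x,r)\in\bar C\times\mathbb R_{\ge0}:V^{\mathcal P_{x,r+}}\neq0\}$ is closed for a closed chamber $\bar C$, and the minimum is attained. Rationality then follows by minimizing $r'$ over the rational polyhedron of pairs $(y,r')$ with $\mathcal P_{y,r'+}\subseteq\mathcal P_{x,\rho(\pi)+}$.
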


The number $\rho(\pi)$ is called the \emph{depth} of the representation $\pi$. In particular, every depth-zero irreducible representation contains an unrefined minimal $K$-type.

\section{The cuspidal support of the irreducible components of $K/K_{+}$}

\begin{lemma}
\label{lem:cuspidal-support}
Let $G$ be an unramified reductive $p$-adic group, let $K$ be a hyperspecial subgroup of $G$, and let $K_{+}$ be its corresponding pro-unipotent radical. Let $(\pi,V)$ be an irreducible smooth depth-zero representation of $G$. Then we may regard $V^{K_{+}}$ naturally as a representation of $K$ and hence also as a representation of the finite reductive group $K/K_{+}$. Under these assumptions, then for an arbitrary irreducible $K/K_{+}$-subrepresentation $W$ of $V^{K_{+}}$, there exists a facet $c$ and a cuspidal representation $\rho$  of the finite Lie group $G_c/G_{c+}$, such that the vertex associated to $K$ is contained in the closure of $c$ and W contains $\rho$. (We can also prove that $[c,\rho]$ is a unrefined minimal K-type of V ). Moreover, every irreducible component of the $K/K_{+}$ representation is a subrepresentation of a Harish-Chandra series of the finite Lie group $K/K_{+}$.
\end{lemma}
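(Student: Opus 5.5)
We take the vertex $x_0$ of $K$, so that $K=\mathcal{P}_{x_0}$, $K_+=\mathcal{P}_{x_0,0+}$, and $K/K_+=\mathbf{M}_{x_0}(\mathfrak{f})$. Since $\pi$ has depth zero and $W\subset V^{K_+}\neq 0$, Theorem~\ref{thm:existence}(2)(i) applies with $y=x_0$. We will then identify the parahoric $\mathcal{Q}\subset K$ it produces with a facet $c$ and turn the statement about $\mathcal{Q}$ into Harish-Chandra theory for the finite group $K/K_+$.

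First, Theorem~\ref{thm:existence}(2)(i) says that the irreducible $K$-submodule $W$ contains a depth-zero minimal $K$-type $(\mathcal{Q},\rho)$ with $\mathcal{Q}\subset\mathcal{P}_{x_0}$. A parahoric contained in $\mathcal{P}_{x_0}$ has the form $\mathcal{Q}=G_c$ for a facet $c$ with $x_0\in\bar c$. Here $\rho$ is a cuspidal representation of $G_c/G_{c+}$, inflated to $G_c$, and it occurs in $W|_{G_c}$. In particular $W^{G_{c+}}\neq 0$ and the $\rho$-isotypic part of $W^{G_{c+}}$ is nonzero, which gives the first assertion. The parenthetical claim that $(G_c,\rho)$ is an unrefined minimal $K$-type of $V$ is exactly the definition in case (i).

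Next we pass to the finite group. The standard Bruhat--Tits facts are these. Since $x_0\in\bar c$, we have $K_+\subset G_{c+}\subset G_c\subset K$. The image $P_c:=G_c/K_+$ is a parabolic subgroup of $\mathbf{M}_{x_0}(\mathfrak{f})=K/K_+$, its unipotent radical is $U_c:=G_{c+}/K_+$, and its Levi quotient satisfies $P_c/U_c\cong G_c/G_{c+}=\mathbf{M}_c(\mathfrak{f})$. In the building this is the correspondence between facets whose closure contains $x_0$ and parabolics of the reductive quotient at $x_0$. With these identifications, $W^{G_{c+}}=W^{U_c}$ is the Harish-Chandra restriction ${}^*R_{L_c\subset P_c}(W)$, where $L_c\cong\mathbf{M}_c(\mathfrak{f})$. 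By the previous step, this restriction contains the cuspidal representation $\rho$.

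Finally, Frobenius reciprocity for Harish-Chandra induction gives
\[
\operatorname{Hom}_{K/K_+}\bigl(W,R_{L_c\subset P_c}\rho\bigr)=\operatorname{Hom}_{L_c}\bigl({}^*R\,W,\rho\bigr)\neq 0 .
\]
So $W$ is a subrepresentation of $R_{L_c\subset P_c}(\rho)$. That means $W$ lies in the Harish-Chandra series of the cuspidal pair $(L_c,\rho)$, which is the last assertion. Everything is semisimple since the groups are finite and we work over $\mathbb{C}$, so "contains" and "quotient" can be used interchangeably.

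The main obstacle is the bookkeeping in the second step. We must check that the parahoric $\mathcal{Q}$ from Theorem~\ref{thm:existence} really is $G_c$ for a facet $c$ whose closure contains $x_0$. We must also check that $G_{c+}$ contains $K_+$ and maps onto the unipotent radical of the parabolic $G_c/K_+$. I would verify both using the affine-root description of $\mathcal{P}_{x,r}$ from \cite[2.6]{MoyPrasad1994}. For $x_0\in\bar c$, an affine root $\psi$ with $\psi(x_0)>0$ satisfies $\psi>0$ on $c$ by continuity and integrality, which gives $K_+\subset G_{c+}$. The roots with $\psi(x_0)=0$ and $\psi|_c>0$ form the unipotent radical of a parabolic subgroup of $\mathbf{M}_{x_0}$.
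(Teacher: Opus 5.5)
Your proposal is correct and follows essentially the same route as the paper. Both apply part (2)(i) of the Moy--Prasad theorem at the hyperspecial vertex to obtain a depth-zero cuspidal type $(G_c,\rho)$ with $G_c\subseteq K$, view $G_c/K_+$ as a parabolic of $K/K_+$ with unipotent radical $G_{c+}/K_+$ and Levi quotient $G_c/G_{c+}$ so that $\rho$ lies in $W^{G_{c+}}$, and conclude by Harish-Chandra theory; you additionally write out the Frobenius reciprocity step and the facet/parabolic bookkeeping that the paper only asserts, and you rightly get $V^{K_+}\neq 0$ from the existence of $W$ rather than from depth zero alone, which in general does not guarantee $K_+$-fixed vectors for a given hyperspecial $K$.
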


\begin{proof}
Since $V$ has depth zero by hypothesis, the space $V^{K_{+}}$ is non-zero.

Let $W$ be an irreducible $K/K_{+}$-subrepresentation of $V^{K_{+}}$.

By Theorem \ref{thm:existence}(2) , $W$ contains a depth-zero minimal $K$-type associated to a parahoric subgroup $G_{c}\subseteq K$ (where $c$ is a facet in the Bruhat–Tits building $\mathcal{B}(G)$). Denote this $G_{c}/G_{c+}$-cuspidal representation by $\rho$. The subgroup $G_{c}/K_{+}$ is a parabolic subgroup of $K/K_{+}$ with Levi factor $G_{c}/G_{c+}$. For simplicity, write this Levi decomposition as $Q = LU$. Where $L$ is the Levi factor and $U$ is its unipotent radical.

There exists a $Q$-embedding of $\rho$ into $W$. Since $\rho$ is $U$-invariant, the image of this embedding lies in $W^{U}$. Note that
$$
W^{U} \subset (V^{K_{+}})^{G_{c+}/K_{+}} = V^{G_{c+}}.
$$
Because $\rho$ is cuspidal, the pair $[\rho,c]$ is an unrefined minimal $K$-type of $V$ by the definition of the unrefined minimal K-type . 

Moreover, we can see that every irreducible component of $V^{K_{+}}$ is a subrepresentation of the parabolic induction from a cuspidal representation of a Levi (this is Harish-Chandra induction since we can find the parabolic and we need not use Deligne-Lusztig induction).
\end{proof}

To simplify terminology, if the representation $(\pi,V)$ of $G$ satisfies that the $K/K_{+}$-representation $V^{K_{+}}$ contains a Steinberg representation, we say that $V$ \textit{contains Steinberg}.

\begin{lemma}
\label{lem:steinberg-iwahori}
Let $G$ be as in Lemma \ref{lem:cuspidal-support}, let $I$ be its Iwahori subgroup, and let $(\pi,V)$ be an irreducible representation of $G$. If $V$ contains Steinberg, then $V$ must have a non-zero $I$-fixed vector.
\end{lemma}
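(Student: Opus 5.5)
Since $V$ contains Steinberg, $V^{K_+}\neq 0$, so $\pi$ has depth zero and Lemma \ref{lem:cuspidal-support} applies. Let $W\subset V^{K_+}$ be an irreducible $K/K_+$-subrepresentation isomorphic to the Steinberg representation $\mathrm{St}$ of $\bar G=K/K_+\simeq G(\mathbb{F}_q)$. Let $\bar B=I/K_+$ be the Borel subgroup of $\bar G$ given by the image of $I$, and $\bar U$ its unipotent radical. Then $I\supset K_+$, and the pro-unipotent radical $I_+$ of $I$ satisfies $I_+/K_+=\bar U$.

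The first step uses the standard fact that $\mathrm{St}$ is a constituent of $\operatorname{Ind}_{\bar B}^{\bar G}\mathbf{1}$. Concretely, $\mathrm{St}^{\bar B}$ is one-dimensional, so $W^{I}=W^{\bar B}\neq 0$. Since $W\subset V$, this gives $V^I\neq 0$ at once. If this fact is accepted, the proof is essentially one line.

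I will still justify the fact, since it is the only real content. The route closest to the paper's viewpoint is Harish-Chandra theory. By Lemma \ref{lem:cuspidal-support}, $W$ lies in a Harish-Chandra series $\operatorname{Ind}_{\bar Q}^{\bar G}\rho$ with $\rho$ cuspidal on a Levi subgroup $\bar L$. Harish-Chandra series are disjoint, and $\mathrm{St}$ lies in the principal series of the trivial character of the torus $\bar T$. Its defining construction is the alternating sum $\sum_{J}(-1)^{|J|}\operatorname{Ind}_{\bar P_J}^{\bar G}\mathbf{1}$, or equivalently the sign-character component of $\operatorname{End}(\operatorname{Ind}_{\bar B}^{\bar G}\mathbf 1)\cong H(\bar G,\bar B)$. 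Hence the cuspidal pair attached to $W$ is $(\bar T,\mathbf 1)$ up to conjugacy, and so $\bar L=\bar T$ and $\rho=\mathbf 1$.

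On the $p$-adic side this says the facet $c$ in Lemma \ref{lem:cuspidal-support} is a chamber, so $G_c$ is an Iwahori subgroup and $G_{c+}$ is its pro-unipotent radical. It also says the depth-zero minimal $K$-type contained in $W$ is the trivial character of $G_c/G_{c+}$. The chamber $c$ has the vertex of $K$ in its closure, so $G_c$ is $K$-conjugate to $I$. A vector of $W$ fixed by $G_c$ exists, because $\rho$ is trivial and embeds into $W^{\bar U_c}$. Translating it by an element of $K$ then gives a nonzero $I$-fixed vector in $V$.

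The one step needing care is the identification of the cuspidal support of $\mathrm{St}$ as $(\bar T,\mathbf 1)$, i.e.\ that $\mathrm{St}^{\bar B}\neq 0$. I would cite the classical result of Curtis or Steinberg that $\mathrm{St}|_{\bar B}$ contains the trivial character exactly once. Alternatively, one can use the explicit model $\mathrm{St}\subset\operatorname{Ind}_{\bar B}^{\bar G}\mathbf 1$ as the image of $\sum_{w\in W}(-1)^{\ell(w)}T_w$; Frobenius reciprocity then produces the $\bar B$-fixed vector directly.
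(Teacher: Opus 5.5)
Your proof is correct, and its main line is shorter than the paper's. The paper argues through Lemma~\ref{lem:cuspidal-support}. Because the cuspidal support of $\mathrm{St}$ is $(\bar T,\mathbf 1)$, the depth-zero minimal $K$-type of $V$ inside the Steinberg constituent must be $(c,\mathbf 1)$ for a chamber $c$ adjacent to the vertex of $K$. Hence $V^{I_+}$ contains the trivial character of $I/I_+$, so $V^I\neq 0$; the paper simply declares the Iwahori of $c$ to be $I$.

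You instead use only that $K_+\subset I$ with $I/K_+=\bar B$, so $V^I\supseteq W^{\bar B}$, together with $\mathrm{St}^{\bar B}\neq 0$. By Frobenius reciprocity, ``the cuspidal support of $\mathrm{St}$ is $(\bar T,\mathbf 1)$'' is exactly the statement $\mathrm{St}^{\bar B}\neq 0$. So the paper's detour through Moy--Prasad theory adds nothing for this lemma. Your argument needs neither Theorem~\ref{thm:existence} nor the irreducibility of $V$: semisimplicity of representations of the finite group $K/K_+$ already turns ``constituent'' into ``subrepresentation''. What the paper's route buys is the identification of the unrefined minimal $K$-type of $V$ as (chamber, trivial), which it reuses in Lemma~3.5.

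Your Harish-Chandra paragraph is essentially the paper's proof. It improves on it by making explicit that the chambers containing the vertex are $K$-conjugate. As a \emph{justification} of $\mathrm{St}^{\bar B}\neq 0$, however, it is circular, since placing $\mathrm{St}$ in the principal series is the same fact. The real justification is the classical citation.

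One side remark is wrong as stated: the image of $\sum_{w}(-1)^{\ell(w)}T_w$ on $\operatorname{Ind}_{\bar B}^{\bar G}\mathbf 1$ is not $\mathrm{St}$ in general. For $\mathrm{SL}_2$ this operator is $1-T_s$, which has eigenvalues $1-q$ (on the trivial constituent) and $2$ (on $\mathrm{St}$), so it is invertible. The correct element is $X=\sum_w(-1)^{\ell(w)}q_w^{-1}T_w$, with the same weights as the paper's $v_0$. It satisfies $T_sX=XT_s=-X$ for every simple $s$ and is a nonzero multiple of the sign idempotent of $H$. This does not affect your proof, since the citation of Curtis or Steinberg suffices.
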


\begin{proof}
If $V$ contains Steinberg, then there exists an irreducible constituent of $V^{K_{+}}$ isomorphic to the Steinberg representation. The cuspidal support of the Steinberg representation is $(T,\mathbf{1})$, where $T$ is a torus and $\mathbf{1}$ denotes the trivial representation. 

By Lemma \ref{lem:cuspidal-support}, there exists a chamber $c$ whose closure contains the vertex associated to $K$ (Assume  the Iwahori subgroup associated to $c$ is $I$ ), such  that Steinberg contains this trivial representation, and the unrefined minimal $K$-type of $V$ is $(c,\mathbf{1})$. This means that the $I/I_{+}$-representation $V^{I_{+}}$ must contain a trivial representation; hence $V^{I} \neq 0$.
\end{proof}

A classical result of Borel and Casselman relates the existence of Iwahori fixed vectors to principal series representations.

\begin{theorem}[Borel–Casselman, \cite{Borel1976}]
\label{thm:borel-casselman}
Let $G$ be a connected reductive group over a non-archimedean local field $k$, and let $I \subset G(k)$ be an Iwahori subgroup. For an irreducible smooth representation $\pi$ of $G(k)$, the following are equivalent:
\begin{enumerate}
    \item $\pi^{I} \neq 0$;
    \item $\pi$ is isomorphic to a subquotient of an unramified principal series representation of $G(k)$.
\end{enumerate}
\end{theorem}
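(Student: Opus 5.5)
The statement is the Borel–Casselman theorem. I will use the Bernstein–Jacquet module approach, with the Iwahori factorization of $I$ and Casselman's canonical lifting as the technical inputs. Fix a Borel subgroup $B=TU$ in good position with respect to $I$, so that $I=I_{\bar U}\,I_T\,I_U$ with $I_T=T(\mathcal{O}_F)$ the maximal compact subgroup of $T$. Write $V_U$ for the (normalized) Jacquet module.

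\textbf{Step 1 (the key isomorphism).} I will prove that the natural projection $V\to V_U$ restricts to a linear isomorphism
\[
V^I \xrightarrow{\ \sim\ } (V_U)^{T(\mathcal{O}_F)}.
\]
This holds for any admissible $V$.

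For surjectivity, take $\bar v\in (V_U)^{T(\mathcal{O}_F)}$ and lift it to some $v\in V$. Choose a strictly $U$-dominant element $a\in T$, so that $a^{n}I_Ua^{-n}$ grows and $a^{n}I_{\bar U}a^{-n}$ shrinks. Averaging $\pi(a^{n})v$ over $I$ for $n\gg0$ gives an $I$-fixed vector. Since $\bar v$ is $I_T$-fixed, and the $I_U$-average does not change the image in $V_U$, the image of this averaged vector is a nonzero multiple of $a^{n}\bar v$. Because $a$ acts invertibly on $V_U$, this gives surjectivity.

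For injectivity, suppose $v\in V^I$ maps to $0$ in $V_U$. Then $\int_{a^{n}I_Ua^{-n}}\pi(u)v\,du=0$ for large $n$, by Jacquet's criterion. Using $I$-invariance together with the Iwahori factorization, this integral is a positive multiple of the Hecke operator $e_I\,\pi(a^{n})\,e_I$ applied to $a^{-n}v$. Casselman's result that these Iwahori–Hecke elements are invertible (their characteristic functions are invertible in $H(G,I)$) then forces $v=0$.

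\textbf{Step 2 ($1\Rightarrow2$).} If $V^I\neq0$, then by Step 1 $V_U$ has a nonzero $T(\mathcal{O}_F)$-fixed part. Since $V$ is admissible and finitely generated, $V_U$ is a finitely generated admissible $T$-module, and $T(\mathcal{O}_F)$-fixed vectors form a $T$-stable summand. Hence $V_U$ has an irreducible quotient $\chi$ that is an unramified character of $T$. Frobenius reciprocity then gives a nonzero, hence injective, map $V\hookrightarrow \operatorname{Ind}_B^G\chi$, with the appropriate $\delta^{1/2}$ normalization. So $V$ is even a subrepresentation of an unramified principal series.

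\textbf{Step 3 ($2\Rightarrow1$).} Suppose $V$ is a subquotient of $I(\chi)$. The Jacquet functor is exact, and so is taking $T(\mathcal{O}_F)$-invariants of smooth representations of a compact group. Hence by Step 1 the functor $V\mapsto V^I$ is exact on admissible representations, which can also be seen directly because $I$ is compact. The semisimplification of $I(\chi)_U$ is $\bigoplus_{w\in W}{}^w\chi$ by the geometric lemma (Bruhat filtration). Every constituent of $I(\chi)$ has nonzero Jacquet module, since there are no cuspidal constituents in a principal series. Therefore $V_U$ is a nonzero module whose constituents are unramified characters ${}^w\chi$. Any such constituent is $T(\mathcal{O}_F)$-fixed, and the unramified part is a direct summand because $T(\mathcal{O}_F)$ is compact, so $(V_U)^{T(\mathcal{O}_F)}\neq 0$. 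Step 1 then gives $V^I\neq0$.

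\textbf{Main obstacle.} The main difficulty is the injectivity in Step 1. It requires the invertibility of $e_I\,a\,e_I$ in the Iwahori–Hecke algebra, i.e.\ the Iwahori–Matsumoto relation $T_aT_b=T_{ab}$ for dominant $a,b$ together with invertibility of the $T_w$. I would quote Casselman and Iwahori–Matsumoto for this rather than reprove it.
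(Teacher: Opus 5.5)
The paper gives no proof of this statement; it only cites Borel. Your outline is essentially Borel's own argument: his key lemma is your Step~1, proved from Jacquet's lemma and the invertibility of the elements $\operatorname{ch}_{IwI}$ in $\mathcal{H}(G,I)$. Step~2 is correct and even gives the stronger conclusion $V\hookrightarrow\operatorname{Ind}_B^G\chi$. Two steps, however, do not go through as written.

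\textbf{Step 3 (the substantive gap).} The fact that ``there are no cuspidal constituents in a principal series'' only tells you that an irreducible subquotient $V$ of $I(\chi)$ has $V_N\neq0$ for \emph{some} proper parabolic $P=MN$. In semisimple rank $\geq2$ this does not give $V_U\neq0$ for the Borel. For example, a constituent of a representation of $\mathrm{GL}_4$ induced from a supercuspidal of $\mathrm{GL}_2\times\mathrm{GL}_2$ has $V_U=0$ but is not cuspidal. What you need is that every constituent of $I(\chi)$ has cuspidal support $(T,{}^w\chi)$. You can derive this from the fact you quote, applied to Levi subgroups. Suppose $V_U=0$, and pick a standard $P=MN\supsetneq B$ minimal with $V_N\neq0$. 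By transitivity of Jacquet functors, every irreducible quotient $\sigma$ of $V_N$ is cuspidal. By exactness and the geometric lemma, $\sigma$ is also a subquotient of a principal series of $M$ induced from a twist of some ${}^w\chi$. Since $M\neq T$, this contradicts the no-cuspidal-constituents statement for $M$. That statement itself rests on cuspidal representations splitting off, so it is a real input. Alternatively, cite uniqueness of cuspidal support (Bernstein--Zelevinsky, Casselman's notes).

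\textbf{Step 1, surjectivity.} With your convention ($a^nI_Ua^{-n}$ grows, $a^nI_{\bar U}a^{-n}$ shrinks), it is $\pi(a^{-n})v$, not $\pi(a^n)v$, that is $I_{\bar U}$-fixed for $n\gg0$. So $e_I\pi(a^{-n})v$ maps to a nonzero multiple of $a^{-n}\bar v$. Likewise, the correct identity in the injectivity part is $\int_{a^nI_Ua^{-n}}\pi(u)v\,du=\operatorname{vol}(I_U)\,\pi(a^n)\operatorname{ch}_{Ia^{-n}I}v$.

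More importantly, $n$ depends on $\bar v$, so invertibility of $a$ on $V_U$ does not by itself give surjectivity (think of $t\mathbb{C}[t]\subset\mathbb{C}[t,t^{-1}]$). You must add two facts. First, the image $X$ of $V^I$ is stable under $a^{-1}$: for $w\in V^I$, $\pi(a^{-1})w$ is already $I_{\bar U}$-fixed, so $e_I\pi(a^{-1})w$ maps to a multiple of $a^{-1}\bar w$. Second, $X$ is finite-dimensional by admissibility. Then $a^{-1}$ is bijective on $X$, and $\bar v=a^n(a^{-n}\bar v)\in X$.

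Two smaller points. Injectivity only needs each $\operatorname{ch}_{IwI}$ to be invertible (quadratic relations plus $T_w=T_{s_1}\cdots T_{s_r}T_\tau$), not $T_aT_b=T_{ab}$. Also, working with a Borel subgroup and $I\cap T=T(\mathcal{O}_F)$ as the maximal compact subgroup covers the unramified groups the paper actually uses, but not the general connected reductive groups in the statement as written.
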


Combining Theorem \ref{thm:borel-casselman} and Lemma \ref{lem:steinberg-iwahori}, we obtain the following immediate corollary.

\begin{corollary}
\label{cor:steinberg-principal-series}
Assume that $G$ satisfies the hypotheses of Lemma \ref{lem:cuspidal-support}. Let $(\pi,V)$ be an irreducible representation of $G$. If $V$ contains Steinberg, then $\pi$ must be a subquotient of an unramified principal series.
\end{corollary}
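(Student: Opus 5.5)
The corollary is a direct combination of Lemma \ref{lem:steinberg-iwahori} with the Borel–Casselman theorem (Theorem \ref{thm:borel-casselman}). The plan is as follows.

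First, let $(\pi,V)$ be irreducible and suppose $V$ contains Steinberg, so $V^{K_+}\neq 0$. Since $K_+=\mathcal{P}_{x,0+}$ for the hyperspecial vertex $x$ attached to $K$, Theorem \ref{thm:existence}(1) gives $\rho(\pi)=0$. Thus $\pi$ has depth zero, and this places us in the setting of Lemma \ref{lem:cuspidal-support}. This preliminary check is needed because Lemma \ref{lem:steinberg-iwahori} relies on Lemma \ref{lem:cuspidal-support}, which assumes depth zero. We also record that $\pi$ is admissible (irreducible smooth representations of reductive $p$-adic groups are admissible), so Theorem \ref{thm:existence} indeed applies.

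Second, we apply Lemma \ref{lem:steinberg-iwahori} to obtain $V^{I}\neq 0$ for an Iwahori subgroup $I\subset K$. The lemma produces an Iwahori attached to some chamber whose closure contains the vertex of $K$. All such Iwahori subgroups are $K$-conjugate, and hence $G(F)$-conjugate. If $V^{gIg^{-1}}\neq 0$, then $V^I=\pi(g)^{-1}V^{gIg^{-1}}\neq 0$, so the nonvanishing does not depend on which Iwahori we use. In particular, we may use the Iwahori $I$ fixed in Theorem \ref{thm:borel-casselman}, namely the one containing the $\mathcal{O}_F$-points of the Borel $B$ relevant to the principal series.

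Third, the implication $(1)\Rightarrow(2)$ of Theorem \ref{thm:borel-casselman} shows that $\pi$ is isomorphic to a subquotient of $\operatorname{Ind}_B^G\chi$ for some unramified character $\chi$ of the maximal torus. This completes the proof.

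None of these steps poses a real obstacle. The only point requiring care is the first: the hypotheses of Lemma \ref{lem:cuspidal-support} (depth zero) must be verified from the assumption that $V$ contains Steinberg. That verification is immediate from the nonvanishing of $V^{K_+}$.
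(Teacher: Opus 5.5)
Your proposal is correct and is exactly the paper's argument: Lemma~\ref{lem:steinberg-iwahori} gives $V^I\neq 0$, and the implication $(1)\Rightarrow(2)$ of Theorem~\ref{thm:borel-casselman} then finishes. Your additional checks make explicit what the paper leaves implicit: that $V^{K_+}\neq 0$ forces depth zero via Theorem~\ref{thm:existence}(1), so that Lemma~\ref{lem:cuspidal-support} genuinely applies, and that the choice of Iwahori subgroup is irrelevant up to conjugacy.
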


\begin{lemma}
Suppose $G$ is an unramified reductive $p$-adic group, and $(\pi,V)$ is an Iwahori-spherical irreducible representation of $G$. And fix an Iwahori subgroup $I$ which is contained in K. Then every irreducible $K/K_{+}$-subrepresentation $W$ of $V^{K_{+}}$ lies in the principal Harish-Chandra series  $\operatorname{Ind}_{I/K_{+}}^{K/K_{+}}(1)$ (i.e., parabolic induction from the trivial representation of a quasi-split torus).
\end{lemma}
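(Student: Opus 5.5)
We argue through Harish-Chandra theory for the finite reductive group $\bar G := K/K_{+}$, with the images $\bar B := I/K_{+}$ (a Borel subgroup of $\bar G$) and $\bar U := I_{+}/K_{+}$ (its unipotent radical). The goal is to show that each irreducible $\bar G$-subrepresentation $W\subseteq V^{K_{+}}$ has nonzero $\bar U$-fixed vectors on which the torus $\bar T=\bar B/\bar U\simeq I/I_{+}$ acts trivially. By Frobenius reciprocity this is precisely the statement that $W$ embeds in $\operatorname{Ind}_{\bar B}^{\bar G}(1)=\operatorname{Ind}_{I/K_{+}}^{K/K_{+}}(1)$.

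First, the depth of $\pi$ is zero. Since $V^{I}\neq 0$ and $I\supseteq I_{+}=\mathcal P_{c,0+}$, where $c$ is the chamber of $I$, Theorem~\ref{thm:existence}(1) gives $\rho(\pi)=0$. Moreover, $(\mathcal P_{c},\mathbf 1)$ is an unrefined minimal $K$-type of $V$, because the trivial character of the torus $I/I_{+}$ is cuspidal. Now take any irreducible $W\subseteq V^{K_{+}}$. Lemma~\ref{lem:cuspidal-support} provides a facet $c'$ whose closure contains the vertex of $K$, together with a cuspidal representation $\rho'$ of $G_{c'}/G_{c'+}$, such that $W\supseteq\rho'$ and $(G_{c'},\rho')$ is a minimal $K$-type of $V$. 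Consequently $W$ lies in the Harish-Chandra series of $\bar G$ attached to the pair $(L',\rho')$, where $L'=G_{c'}/G_{c'+}$ is the Levi factor of the parabolic subgroup $G_{c'}/K_{+}$.

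The central step, and the one I expect to be hardest, is to show that $(L',\rho')$ is $\bar G$-conjugate to $(\bar T,\mathbf 1)$. Theorem~\ref{thm:existence} tells us that the two minimal $K$-types $(\mathcal P_{c},\mathbf 1)$ and $(G_{c'},\rho')$ are associates. By Definition~\ref{def:associate}(i), there is then a $g\in G$ such that $\mathcal P_{c}\cap g G_{c'}g^{-1}$ surjects onto both reductive quotients, and $\mathbf 1\simeq\operatorname{Ad}(g)\rho'$. The surjectivity onto both quotients should force the two reductive quotients to be identified: $\mathbf M_{c}(\mathfrak f)\cong\mathbf M_{gc'}(\mathfrak f)$. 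In particular $\mathbf M_{c'}$ is a torus, so $c'$ is a chamber, and $\rho'\simeq\mathbf 1$. I would carry this out using the standard Moy–Prasad fact that associate depth-zero types have conjugate reductive quotients. If that fact is not directly available, the fallback is a dimension argument: $\mathbf M_{gc'}(\mathfrak f)$ is a quotient of a subgroup of $\mathcal P_{c}$ whose image in $\mathbf M_{c}(\mathfrak f)=\bar T$ is all of $\bar T$, and this comparison should rule out the case where $gc'$ has a larger reductive quotient. Granting this, $c'$ is a chamber in the star of the vertex of $K$, so $G_{c'}/K_{+}$ is a Borel subgroup of $\bar G$. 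All Borel subgroups of $\bar G$ are conjugate, so we may conjugate inside $K$ to replace $G_{c'}$ by $I$ without changing the isomorphism class of $W$.

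To finish, $W$ contains a copy of the trivial representation of $\bar T$ inflated to $\bar B$, that is, $\operatorname{Hom}_{\bar B}(\mathbf 1,W)\neq 0$. Frobenius reciprocity, together with the semisimplicity of representations of the finite group $\bar G$, then gives $\operatorname{Hom}_{\bar G}(W,\operatorname{Ind}_{\bar B}^{\bar G}\mathbf 1)\neq 0$. Since $W$ is irreducible, it is a subrepresentation of $\operatorname{Ind}_{I/K_{+}}^{K/K_{+}}(1)$, which is the claim of the lemma.
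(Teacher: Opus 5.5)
Your proposal is correct and follows essentially the same route as the paper: $V^I\neq 0$ makes $(\mathcal P_c,\mathbf 1)$ a depth-zero minimal $K$-type, and Lemma~\ref{lem:cuspidal-support} produces a minimal $K$-type $(G_{c'},\rho')$ inside $W$; associativity then forces $(G_{c'},\rho')$ to be (chamber, trivial), and conjugacy of Borel subgroups of $K/K_+$ together with Frobenius reciprocity finishes the proof. The paper asserts without argument the associativity step you flag as hardest, and it is quick: $\operatorname{Ad}(g)\rho'\cong\mathbf 1$ forces $\rho'$ to be trivial, and the trivial representation of $\mathbf M_{c'}(\mathfrak f)$ is cuspidal only when $\mathbf M_{c'}$ is a torus, i.e.\ when $c'$ is a chamber, so you do not need the fallback dimension argument.
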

\begin{proof}
For a fixed Iwahori subgroup $I$ of $G$ , By \cite[Theorem 8.4.10(2)]{KalethaPrasad2023}, we know that $I/I_{+}$ is the minimal Levi of $K/K_{+}$. Since every finite Lie group is quasi-split, $I/I_{+}$ is a torus, and $I/K_{+}$ is the Borel containing this torus.

Since $V$ is Iwahori-spherical, $V^I \neq 0$; hence the unrefined minimal $K$-type of $V$ is (the chamber corresponding to $I$, trivial representation). Since two unrefined minimal K type is associated, every unrefined minimal K type of V has the form (chamber, trivial representation).

By the proof of Lemma \ref{lem:cuspidal-support}, we know that for every irreducible component $W$ of $V^{K_{+}}$ , there exists a chamber c (assume the Iwahori subgroup associated to c is I'), such that W  is a subrepresentation of $\operatorname{Ind}_{I'/K_{+}}^{K/K_{+}}(1)$. It is well known that every Borel subgroups of a finite Lie group are rational-conjugated. Hence  $\operatorname{Ind}_{I'/K_{+}}^{K/K_{+}}(1)$ is isomorphic to  $\operatorname{Ind}_{I/K_{+}}^{K/K_{+}}(1)$. Hence the lemma holds.
\end{proof}

\section{Some facts about the unramified principal series}

\subsection{Iwahori fixed subspace of an unramified principal series}

The material in this subsection is derived from the seminal work of Casselman \cite{Casselman1980}. Let $G$ be an unramified, quasi-split, connected reductive group defined over a local field $k$. Let $B$ be a Borel subgroup of $G$ with Levi subgroup $T$. We denote by $K$ a hyperspecial subgroup of $G$ and by $I$ the Iwahori subgroup contained in $K$. Let $\chi$ be an unramified character of $T$. We define $I(\chi)$ as the unnormalized induced representation $\operatorname{Ind}_B^G \chi$.

In this subsection, we exhibit a canonical basis for the Iwahori fixed subspace $I(\chi)^I$. Proposition 2.1 of Casselman's paper \cite{Casselman1980} states that the dimension of $I(\chi)^I$ equals the order of the relative Weyl group of $G$. We can construct these functions more concretely. To proceed, we first recall some notation.

Let $C_c^{\infty}(G)$ denote the space of compactly supported, locally constant, complex-valued functions on $G$. We define a projection from $C_c^{\infty}(G)$ onto $I(\chi)$. For a function $f \in C_c^{\infty}(G)$, we define its projection onto $I(\chi)$ by the formula
$$
\mathcal{P}_{\chi}(f)(g) = \int_{B} \chi^{-1}(b) f(bg) \, db,
$$
where $db$ is a left-invariant Haar measure on the Borel subgroup $B$ normalized so that $\operatorname{vol}(K \cap B) = 1$.

For any subset $X \subset G$, let $\operatorname{ch}_X$ denote its characteristic function. Let $W$ be the relative Weyl group of $G$. In \cite[Proposition 2.1]{Casselman1980}, Casselman showed that the functions $\mathcal{P}_{\chi}(\operatorname{ch}_{B w B})$, as $w$ runs over all elements of $W$, form a basis for the vector space $I(\chi)^I$. We denote $\mathcal{P}_{\chi}(\operatorname{ch}_{B w B})$ by $\phi_{w,\chi}$. We formally state this result as follows.

\begin{theorem}[\cite{Casselman1980}]
\label{thm:casselman-basis}
The functions $\phi_{w,\chi}$ for $w \in W$ form a basis of $I(\chi)^I$.
\end{theorem}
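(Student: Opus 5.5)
We prove Theorem~\ref{thm:casselman-basis} by using the Bruhat decomposition $G=\bigsqcup_{w\in W} BwI$ (valid because $G = BK$ by the Iwasawa decomposition and $K=\bigsqcup_{w} (B\cap K)wI$ by the Bruhat decomposition of $K/K_+$ lifted along $I\supset K_+$). This gives a dimension count and a triangularity statement for the $\phi_{w,\chi}$. Throughout, we read $\operatorname{ch}_{BwB}$ in the sense Casselman intends, namely the compactly supported function $\operatorname{ch}_{IwI}$ (equivalently, we project $\operatorname{ch}_{wI}$ or $\operatorname{ch}_{IwI}$). The literal $\operatorname{ch}_{BwB}$ is not compactly supported, so this reading is forced.

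\textbf{Step 1 (dimension bound).} A function $f\in I(\chi)^I$ satisfies $f(bgi)=\chi(b)f(g)$, so it is determined by its values $f(w)$ for $w\in W$, one for each double coset $BwI$. Hence $\dim I(\chi)^I\le |W|$. Conversely, for each $w$ one can define $f_w$ supported on $BwI$ with $f_w(bwi)=\chi(b)$. This is well defined precisely when $\chi$ is trivial on $B\cap wIw^{-1}$. That subgroup is contained in $T(\mathcal{O})U(F)$, compactly, and $\chi$ is unramified and trivial on $U$, so the condition holds. Thus $\{f_w\}$ is a basis of $I(\chi)^I$ and $\dim I(\chi)^I=|W|$.

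\textbf{Step 2 (the projections lie in $I(\chi)^I$).} Left invariance of $db$ together with the twist by $\chi^{-1}$ gives $\mathcal{P}_\chi(f)(b'g)=\chi(b')\mathcal{P}_\chi(f)(g)$. Here we must be careful about the modulus character: we use a left Haar measure so that the substitution $b\mapsto bb'^{-1}$ introduces no modulus factor, or else we absorb that factor into the unnormalized convention. Right $I$-invariance of $\operatorname{ch}_{IwI}$ then passes to $\mathcal{P}_\chi(\operatorname{ch}_{IwI})$, so each $\phi_{w,\chi}$ lies in $I(\chi)^I$.

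\textbf{Step 3 (triangularity).} Since $\operatorname{ch}_{IwI}$ is supported on $IwI\subset BwI\cdot$(lower cells), $\phi_{w,\chi}$ is supported on $B\,IwI$. Using $I=(I\cap \bar U)(I\cap T)(I\cap U)$ (Iwahori factorization), we get $B\,IwI\subseteq \bigcup_{y\le w} ByI$ in the Bruhat order, with the coefficient at $y=w$ equal to $\phi_{w,\chi}(w)=\mathrm{vol}(B\cap IwIw^{-1})>0$. Hence the transition matrix from $\{\phi_{w,\chi}\}$ to $\{f_w\}$ is triangular with nonzero diagonal, and the $\phi_{w,\chi}$ form a basis.

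The main obstacle is Step 3: making the support containment $B\,IwI\subseteq\bigcup_{y\le w}ByI$ rigorous. The diagonal nonvanishing is easy. For the containment, we would first try the alternative of projecting $\operatorname{ch}_{wI}$ instead, whose support is exactly $BwI$. That makes the matrix diagonal and avoids the Bruhat-order argument entirely, and it is equally consistent with Casselman's Proposition~2.1.
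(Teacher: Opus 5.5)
Your skeleton is the right one: the decomposition $G=\bigsqcup_{w\in W}BwI$, the cell functions $f_w$ giving $\dim I(\chi)^I=|W|$, and then identifying $\phi_{w,\chi}$ with the $f_w$. This is exactly what the paper records after citing Casselman: $\phi_{w,\chi}$ is supported in $BwI$, $\phi_{w,\chi}(bwi)=\chi(b)$, and the dimension is $|W|$.

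\textbf{Gap in Step 3.} As written the proof is incomplete at the point you flag yourself. You never determine the support of $\phi_{w,\chi}=\mathcal{P}_\chi(\operatorname{ch}_{IwI})$, and the Bruhat-order triangularity is a detour. The missing fact is $IwI=(B\cap K)wI$, and you already have it. Step~1 uses $K=\bigsqcup_w(B\cap K)wI$; since $(B\cap K)wI\subseteq IwI$ and $K=\bigsqcup_w IwI$, the two partitions force equality. Directly, $I=(B\cap K)K_+$ with $K_+$ normal in $K$. In your Iwahori factorization written as $I=(I\cap U)(I\cap T)(I\cap\bar U)$, this amounts to $I\cap\bar U\subseteq K_+$, hence $(I\cap\bar U)w\subseteq wK_+\subseteq wI$.

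Consequently:
\begin{itemize}
\item $B\,IwI=BwI$ exactly, with no lower cells at all;
\item $B\cap IwIw^{-1}=B\cap K$;
\item $\phi_{w,\chi}(w)=\int_{B\cap K}\chi^{-1}(b)\,db=\operatorname{vol}(B\cap K)=1$, because $\chi$ is trivial on $B\cap K$.
\end{itemize}
So $\phi_{w,\chi}=f_w$, the transition matrix is the identity rather than merely triangular, and your Step~1 finishes the proof. Your fallback of projecting $\operatorname{ch}_{wI}$ changes the functions in the statement. Relating it back via $\mathcal{P}_\chi(\operatorname{ch}_{IwI})=[IwI:I]\,\mathcal{P}_\chi(\operatorname{ch}_{wI})$ needs the same identity $IwI=(B\cap K)wI$.

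\textbf{Haar measure in Step 2.} You have the measure backwards. The substitution $b\mapsto bb'^{-1}$ is a right translation, so it is a \emph{right} Haar measure on $B$ that gives $\mathcal{P}_\chi(f)(b'g)=\chi(b')\mathcal{P}_\chi(f)(g)$ with no correction. With a left Haar measure (as written both in your proposal and in the paper), $\mathcal{P}_\chi$ lands in the representation induced from $\chi$ twisted by a power of the modulus character $\Delta_B$. That differs from $I(\chi)$ unless $B$ is unimodular, and the discrepancy cannot be absorbed into the unnormalized convention. Use a right Haar measure, or insert $\Delta_B$ into the integrand. Since $\Delta_B$ is trivial on the compact subgroup $B\cap K$, the value $\phi_{w,\chi}(w)=1$ computed above is unaffected.
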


To provide a more explicit description, we can write down the values of these functions concretely. For any $b \in B$, $w \in W$, and $i \in I$, we have
$$
\phi_{w,\chi}(b w i) = \chi(b).
$$
Furthermore, the support of $\phi_{w,\chi}$ is contained in the double coset $B w I$.

It is well known that the $K$-fixed subspace $I(\chi)^K$ is one-dimensional. It is not hard to see that the sum $\sum_{w \in W} \phi_{w,\chi}$ is a non-zero vector in $I(\chi)^K$.

\subsection{Effect of affine reflections on the Iwahori-fixed vectors}

In this section we recall the results of Lansky \cite{Lansky2001} on the action of the affine Weyl group on the space of Iwahori-fixed vectors of an unramified principal series representation. We adapt his notation to match our conventions: let $I$ denote an Iwahori subgroup (denoted by $B$ in \cite{Lansky2001}), and let $B$ denote a minimal parabolic subgroup (denoted by $P$ in \cite{Lansky2001}). The opposite parabolic is denoted $B^{-}$. We use unnormalized induction, i.e., the induced representation $\operatorname{Ind}_B^G \chi$ without the modular factor $\delta^{1/2}$; consequently, the formulas in Corollary 3.2 of \cite{Lansky2001} simplify by omitting all occurrences of $\delta^{1/2}$. The maximal split torus is denoted by $S$ (instead of $A$ in \cite{Lansky2001}).

We briefly recall the notation used in \cite{Lansky2001} which will be needed in the sequel. Let $G$ be a quasi-split connected reductive $p$-adic group, and fix a maximal split torus $S$ contained in a minimal parabolic $B$. And assume that the root system of G relative to S is $\Phi'$ and suppose that $\Phi$ is the sub root system vanishing on the hyperspecial point $x_0$. $\Phi$ is reduced and has a bijection with $^{nd}\Phi'$. Let $M$ be the Levi component of $B$ (the centralizer of $S$), and let $N$ be the unipotent radical of $B$. Write $B = MN$. Let $B^{-} = MN^{-}$ be the opposite parabolic. From now on, we normalize the Haar measure on $G$ such that $I$ has volume 1.

Let $W = N_G(S)/M$ be the relative Weyl group. Let $\mathcal{N}$ be the normalizer of $S$ in $G$, and let $M_0$ be the maximal compact subgroup of $M$. Define the Iwahori–Weyl group $\widetilde{W} = \mathcal{N}/M_0$. There is a natural surjection $\nu: \widetilde{W} \to W$. The affine Weyl group $W_{\mathrm{aff}}$ is a normal subgroup of $\widetilde{W}$ generated by affine reflections.

Let $\Phi$ be the reduced root system obtained from the affine roots vanishing at a fixed special point $x_0$; it is identified with the set of non-divisible relative roots. Let $\Phi^{+}$ be the positive roots determined by $B$, and let $\Delta$ be the corresponding set of simple roots. For each irreducible component $\Phi_i$ of $\Phi$, let $\widetilde{\alpha}_i$ denote its highest root, and let $\widetilde{\Delta}$ be the set of these highest roots.

The affine root system $\Phi_{\mathrm{aff}}$ consists of the affine functions $\alpha + k$ ($\alpha \in \Phi$, $k \in \mathbb{Z}$) and $\widetilde{\alpha} - 1$ ($\widetilde{\alpha} \in \widetilde{\Delta}$). The set of simple affine roots is
$$
\Delta_{\mathrm{aff}} = \{\alpha \mid \alpha \in \Delta\} \cup \{\widetilde{\alpha} - 1 \mid \widetilde{\alpha} \in \widetilde{\Delta}\}.
$$
For each $\alpha \in \Delta_{\mathrm{aff}}$, let $w_{\alpha}$ be the corresponding affine reflection in $W_{\mathrm{aff}}$. The set $S_{\mathrm{aff}} = \{w_{\alpha} \mid \alpha \in \Delta_{\mathrm{aff}}\}$ generates $W_{\mathrm{aff}}$ as a Coxeter group.

For $\alpha \in \Phi$, define the translation $a_{\alpha} = w_{\alpha} w_{\alpha-1} \in W_{\mathrm{aff}}$. One has $a_{-\alpha} = a_{\alpha}^{-1}$.

Let $I$ be an Iwahori subgroup fixed by a chamber in the apartment, and let $K_0$ be the special maximal compact subgroup corresponding to the special point $x_0$. The group $I$ has an Iwahori decomposition with respect to $B$ and $M$:
$$
I = (I \cap N^{-}) (I \cap M) (I \cap N).
$$
Set $I_{\alpha} = I \cap w_{\alpha} I w_{\alpha}$ for $\alpha \in \Delta$, and $I_{\widetilde{\alpha}-1} = I \cap w_{\widetilde{\alpha}-1} I w_{\widetilde{\alpha}-1}$ for $\widetilde{\alpha} \in \widetilde{\Delta}$.

For each affine root $\psi$, let $U_{\psi}$ denote the corresponding root subgroup. Then $I$ is generated by $M_0$ and the subgroups $U_{\psi}$ with $\psi > 0$. In particular,
$$
I \cap N = \prod_{\alpha \in \Phi^{+}} U_{\alpha},\qquad I \cap N^{-} = \prod_{\alpha \in \Phi^{+}} U_{-\alpha+1},
$$
and $I \cap M = M_0$.

For any $w \in \widetilde{W}$, denote by $q(w)$ the index $[I w I : I]$. For $\alpha \in \Phi_{\mathrm{aff}}$, let $q_{\alpha} = [U_{\alpha-1} : U_{\alpha}]$. Then one has
$$
q(w_{\alpha}) = q_{\alpha+1} \ (\alpha \in \Delta),\qquad q(w_{\widetilde{\alpha}-1}) = q_{\widetilde{\alpha}+2} = q_{\widetilde{\alpha}} \ (\widetilde{\alpha} \in \widetilde{\Delta}).
$$

Let $\chi$ be an unramified character of $M$ (trivial on $M_0$), extended to a character of $B$ by making it trivial on $N$. Define the unramified principal series representation $I(\chi) = \operatorname{Ind}_B^G \chi$ (unnormalized induction). The space of Iwahori-fixed vectors $I(\chi)^I$ has a basis $\{\phi_{w,\chi} \mid w \in W\}$ described in \cite[Prop. 2.1]{Casselman1980}. Concretely, for $p \in B$, $w' \in W$, $b \in I$,
$$
\phi_{w,\chi}(p w' b) = 
\begin{cases}
\chi(p) & \text{if } w' = w,\\
0 & \text{otherwise}.
\end{cases}
$$

From the results of \cite[Theorem 3.1]{Lansky2001} one deduces the action of the characteristic functions of the double cosets $I s I$ ($s \in S_{\mathrm{aff}}$) in the Iwahori–Hecke algebra $\mathcal{H}(G,I)$. The following corollary records the formulas we need, adapted to unnormalized induction.

\begin{corollary}
\label{cor:lansky-hecke}
For $w \in W$, $\alpha \in \Delta$, and $\widetilde{\alpha} \in \widetilde{\Delta}$, we have in $\mathcal{H}(G,I)$ (cf. \cite{Lansky2001}[3.2], adapted to unnormalized induction):
\begin{align}
\operatorname{ch}_{I w_{\alpha} I} * \phi_{w,\chi} &=
\begin{cases}
q_{\alpha+1} \phi_{w w_{\alpha},\chi} + (q_{\alpha+1} - 1) \phi_{w,\chi} & \text{if } w\alpha \in \Phi^{-},\\[4pt]
\phi_{w w_{\alpha},\chi} & \text{if } w\alpha \in \Phi^{+},
\end{cases} \\[8pt]
\operatorname{ch}_{I w_{\widetilde{\alpha}-1} I} * \phi_{w,\chi} &=
\begin{cases}
\chi(a_{w\widetilde{\alpha}}) \phi_{w w_{\widetilde{\alpha}},\chi} & \text{if } w\widetilde{\alpha} \in \Phi^{-},\\[4pt]
q_{\widetilde{\alpha}} \chi(a_{w\widetilde{\alpha}}) \phi_{w w_{\widetilde{\alpha}},\chi} + (q_{\widetilde{\alpha}} - 1) \phi_{w,\chi} & \text{if } w\widetilde{\alpha} \in \Phi^{+}.
\end{cases}
\end{align}
\end{corollary}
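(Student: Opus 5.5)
Corollary~\ref{cor:lansky-hecke} is derived from \cite[Theorem 3.1]{Lansky2001} (Lansky's Corollary 3.2) by removing the modular factor. My plan is to convert Lansky's normalized formulas directly. First I would write down the relation between the two inductions: if $f\in\operatorname{Ind}_B^G\chi$ (unnormalized), then $f\mapsto f$ identifies it with the normalized induction $\operatorname{n\text{-}Ind}_B^G(\chi\delta^{-1/2})$. This identification is $G$-equivariant, so it commutes with the convolution action of $\mathcal{H}(G,I)$. It also sends the Casselman basis element $\phi_{w,\chi}$ (unnormalized) to Lansky's basis element $\phi_{w,\chi\delta^{-1/2}}$, because both are defined by the same recipe $pw'b\mapsto(\text{character})(p)\,\delta_{w,w'}$. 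Therefore Lansky's formulas for the character $\chi' = \chi\delta^{-1/2}$ translate verbatim, once every occurrence of $(\chi'\delta^{1/2})$ is replaced by $\chi$.

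Next I treat the finite simple reflections $w_\alpha$, $\alpha\in\Delta$. Here I would also sanity-check the formulas directly. Write $\operatorname{ch}_{Iw_\alpha I}*\phi_{w,\chi}(g)=\sum_{x\in Iw_\alpha I/I}\phi_{w,\chi}(gx)$ and take representatives $x = u w_\alpha$ with $u\in U_\alpha/U_{\alpha+1}$, giving $q_{\alpha+1}$ cosets. Evaluate at $g=w'$. The computation reduces to the rank-one group generated by $U_{\pm\alpha}$. Each $w'uw_\alpha$ lies either in $Bw'w_\alpha I$ or, after a rank-one Bruhat manipulation, in $Bw'I$, according to the sign of $w'\alpha$. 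No character value appears because the relevant torus elements lie in $M_0$. This yields the Iwahori–Matsumoto-type quadratic formulas with no $\chi$-dependence, in agreement with the first display.

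For the affine reflection $w_{\widetilde\alpha-1}=a_{\widetilde\alpha}w_{\widetilde\alpha}$ (up to the appropriate convention), the same coset decomposition applies, now using $U_{-\widetilde\alpha+1}$. The translation part $a_{w\widetilde\alpha}$ has to be moved to the left past $w$ into $B$. There it contributes $\chi(a_{w\widetilde\alpha})$, and in Lansky's normalized version also $\delta^{1/2}(a_{w\widetilde\alpha})$; that is exactly the factor absorbed by the renormalization above. The case split follows because $w\widetilde\alpha$ negative versus positive determines whether the reflection lengthens or shortens relative to $w$. This reverses the roles compared with the finite simple roots, since the affine root is $-\widetilde\alpha+1$.

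The main obstacle is bookkeeping rather than ideas. I need to match Lansky's conventions (left versus right convolution, $P$ versus $B$, the sign of the affine root, and which of $q_{\alpha+1},q_{\widetilde\alpha}$ appears) and check that every $\delta^{1/2}$ cancels exactly, rather than leaving a residual $\delta(a_{w\widetilde\alpha})$. I would verify this on $SL_2$. There $\Delta=\{\alpha\}$, $\widetilde\alpha=\alpha$, $W=\{1,s\}$, and all four cases can be computed by hand from the explicit matrices.
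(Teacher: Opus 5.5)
Your proposal is correct and is the same argument as the paper's: the paper takes Lansky's Corollary 3.2 and deletes the factors $\delta^{1/2}$. Your observation that $\operatorname{Ind}_B^G\chi$ coincides with the normalized induction of $\chi\delta^{-1/2}$, as a $G$-representation and basis element by basis element, is exactly the justification the paper leaves implicit (both tacitly assume Lansky normalizes $\phi_w$ to take the inducing-character value at $w$). In your optional direct check, fix the convention as $w_{\widetilde\alpha-1}=w_{\widetilde\alpha}a_{\widetilde\alpha}=a_{-\widetilde\alpha}w_{\widetilde\alpha}$, which follows from $a_\alpha=w_\alpha w_{\alpha-1}$ and not from $a_{\widetilde\alpha}w_{\widetilde\alpha}$ (that would give $\chi(a_{w\widetilde\alpha})^{-1}$); then evaluating at $ww_{\widetilde\alpha}$ gives $ww_{\widetilde\alpha}w_{\widetilde\alpha-1}=a_{w\widetilde\alpha}w$ modulo $M_0$, producing exactly $\chi(a_{w\widetilde\alpha})$, and the $SL_2$ hand computation does confirm all four cases.
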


\section{Representation theory of the finite Lie group}

In this section let us suppose that $G$ is a reductive Lie group defined over $\mathbb{F}_q$, $B$ is its Borel, $T$ is the maximal torus contained in $B$, and $U$ is its unipotent radical. $F$ is the geometric Frobenius endomorphism. The material of this section comes from Chapter 4 of the book of Geck and Jacon \cite{GeckJacon}.

\begin{definition}
The ring $H=\operatorname{Hom}_{\mathbb{C}[G^F]}(\operatorname{Ind}_{B^F}^{G^F}\mathbf{1},\operatorname{Ind}_{B^F}^{G^F}\mathbf{1})^{\mathrm{opp}}$ (In order to simplify the terminology, from now on, we will omit "opp". )is called the Iwahori–Hecke algebra of $G$.
\end{definition}
\cite{GeckJacon} tells us the relations between irreducible subrepresentations of the principal Harish-Chandra series and simple $H$-modules.

\begin{theorem}
The functor $\Theta: M \longmapsto \operatorname{Hom}_{G^F}(\operatorname{Ind}_{B^F}^{G^F}\mathbf{1},M)$ induces a bijection from irreducible subrepresentations of the principal Harish-Chandra series to the simple Hecke algebra modules up to isomorphism. The action of this $H$-module is $h*f=f \circ h$ for $h\in H$ and $f \in \operatorname{Hom}_{G^F}(\operatorname{Ind}_{B^F}^{G^F}\mathbf{1},M)$.
\end{theorem}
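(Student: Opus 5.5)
The plan is to derive the statement from the general theory of $e$-idempotent functors for finite-dimensional semisimple algebras. Write $P=\operatorname{Ind}_{B^F}^{G^F}\mathbf{1}$ and set $A=\mathbb{C}[G^F]$, a semisimple algebra. By Frobenius reciprocity, $P\cong Ae$, where $e=|B^F|^{-1}\sum_{b\in B^F}b$ is the idempotent projecting onto $B^F$-invariants. This identifies $\Theta(M)=\operatorname{Hom}_{A}(Ae,M)\cong eM=M^{B^F}$. Likewise $H=\operatorname{End}_A(Ae)^{\mathrm{opp}}\cong eAe$, acting on $eM$ by left multiplication. We will check that this left multiplication agrees with the prescribed action $h*f=f\circ h$: under $f\mapsto f(e)$, the composite $f\circ h$ corresponds to $f(h(e))=f(eae)=eae\cdot f(e)$. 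The ``opp'' convention is exactly what makes this a left action, so the two descriptions of the module structure coincide.

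Next we show that $\Theta$ sends irreducibles in the principal series to simple modules. Let $M$ be an irreducible $A$-module with $eM\neq 0$; this is equivalent to $\operatorname{Hom}(P,M)\neq0$, i.e.\ to $M$ being a constituent of $P$. Take any nonzero $x\in eM$. Then $Ax=M$ by irreducibility, so $eAe\,x=eAx=eM$. Every nonzero vector therefore generates $eM$, and $eM$ is a simple $eAe$-module.

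For injectivity, suppose $eM\cong eM'$ as $eAe$-modules. Since $A$ is semisimple, write $A=\bigoplus_i \operatorname{End}(M_i)$. Then $eAe=\bigoplus_i \operatorname{End}_{\mathbb{C}}(eM_i)$, where the sum runs over those $i$ with $eM_i\neq 0$. This is a product of matrix algebras, and the simple modules $eM_i$ lie in distinct blocks. So they are pairwise non-isomorphic, which gives injectivity.

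Surjectivity follows from the same block decomposition: every simple $eAe$-module is $eM_i$ for some $i$ with $eM_i\neq0$. Alternatively, for a simple $eAe$-module $N$, the module $Ae\otimes_{eAe}N$ has a simple quotient $M$ with $eM\cong N$.

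The main obstacle is bookkeeping rather than mathematics. One must verify carefully that the identification $\operatorname{End}_A(Ae)^{\mathrm{opp}}\cong eAe$ is compatible with the stated action $h*f=f\circ h$, without producing an anti-isomorphism. I would also note that ``subrepresentation of the principal Harish-Chandra series'' is equivalent to ``constituent'', by semisimplicity.
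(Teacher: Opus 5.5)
Your proposal is correct. The paper does not prove this statement itself. It quotes it from Geck--Jacon (Chapter 4), and the only addition is the Frobenius reciprocity identification $\operatorname{Hom}_{G^F}(\operatorname{Ind}_{B^F}^{G^F}\mathbf{1},M)=M^{B^F}$ written just after it.

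Your argument supplies a self-contained proof in place of that citation. You write $\operatorname{Ind}_{B^F}^{G^F}\mathbf{1}\cong Ae$ with $e$ the $B^F$-averaging idempotent. You then identify $H\cong eAe$ and $\Theta(M)\cong eM$, and read off simplicity, injectivity and surjectivity from $eAe\cong\bigoplus_i\operatorname{End}_{\mathbb{C}}(eM_i)$. This is the standard idempotent-truncation argument, specialized to the semisimple case.

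Compared with the general Hom-functor results the citation relies on, which do not need semisimplicity, your version is shorter. Semisimplicity of $\mathbb{C}[G^F]$ makes ``subrepresentation'' and ``constituent'' coincide, and the block decomposition then settles the bijection at once. You do lose the generality of the cited framework, for instance non-semisimple coefficients.

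Your approach also gains something the paper needs but leaves implicit. You check that $f\circ h$ corresponds to left multiplication by $h(e)\in eAe$ on $f(e)\in M^{B^F}$, and that the $\mathrm{opp}$ convention yields an honest left action rather than an anti-isomorphism. That is exactly the translation the paper uses without comment in the proof of Lemma 6.2, where $T_{s_\alpha}v$ is computed as $f_v(T_{s_\alpha}(I/K_+))=\sum_{u}uv$. Your bookkeeping therefore justifies that step as well.

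One minor point of labelling: $\operatorname{Ind}_{B^F}^{G^F}\mathbf{1}\cong Ae$, via $gB^F\mapsto ge$, is a direct identification rather than Frobenius reciprocity. Frobenius reciprocity is what gives $\operatorname{Hom}_A(Ae,M)\cong eM$.
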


We can go one step further using Frobenius reciprocity:
$$
\operatorname{Hom}_{G^F}(\operatorname{Ind}_{B^F}^{G^F}\mathbf{1},M)=\operatorname{Hom}_{B^F}(\mathbf{1},M)=M^{B^F}.
$$
Hence if we have a finite dimensional representation $V$ of $G^F$, and we assume further that every irreducible component of $V$ is a subrepresentation of $\operatorname{Ind}_{B^F}^{G^F}\mathbf{1}$, then we can take $B^F$-invariant vectors of $V$ to make it into an $H$-module. Also note that ${Ind}_{B^F}^{G^F}\mathbf{1}=\mathbb{C}[G^F/B^F]$.

Then for an arbitrary element $n$ in the normalizer $N_{G^F}(T)$, we can define an element of $H$:
$$
T_n:\operatorname{Ind}_{B^F}^{G^F}\mathbf{1} \longrightarrow \operatorname{Ind}_{B^F}^{G^F}\mathbf{1}
$$
which sends $xB^F$ to the sum of all cosets $yB^F$ such that $x^{-1}y \in B^FnB^F$. We also define $q_w=[B^FwB^F:B^F]$ for an arbitrary element $w$ in the Weyl group. Then we have:

\begin{theorem}
$\{T_w\mid w\in W\}$ forms a basis of the Iwahori–Hecke algebra $H$. If we further assume that $S$ is the set of simple reflections, then $\{T_w\mid w \in W\}$ forms a basis of $H$, and $H$ has the following presentation:
$$
H = \left\langle T_s\;(s\in S) \;\middle|\; 
\underbrace{T_sT_tT_s\cdots}_{m_{st}\text{ factors}} = \underbrace{T_tT_sT_t\cdots}_{m_{st}\text{ factors}},\;
(T_s - q_s)(T_s +1) = 0 \right\rangle.
$$
\end{theorem}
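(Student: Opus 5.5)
This is the standard structure theorem for the Iwahori--Hecke algebra of a finite group with a split $BN$-pair (Iwahori, Matsumoto). I would prove it directly from the double coset decomposition.

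\textbf{Step 1: the $T_w$ form a basis.} A $\mathbb{C}[G^F]$-endomorphism of $\mathbb{C}[G^F/B^F]$ is determined by the image of the coset $B^F$. That image must be a $B^F$-invariant vector, i.e.\ a function on $B^F\backslash G^F/B^F$. By the Bruhat decomposition $G^F=\bigsqcup_{w\in W}B^FwB^F$ (the relative Weyl group of the finite group), so $\dim H=|W|$. By construction, $T_w$ sends $B^F$ to the sum of the cosets in $B^FwB^F$. The $T_w$ have disjoint supports, hence are linearly independent, and they therefore form a basis. Note also that $T_n$ depends only on the image of $n$ in $W$, since $T\subset B^F$.

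\textbf{Step 2: the multiplication rules.} Let $s\in S$ and $w\in W$. I would prove:
\[
T_wT_s=T_{ws}\quad \text{if } \ell(ws)>\ell(w),
\]
\[
T_s^2=q_sT_1+(q_s-1)T_s .
\]
The first rule follows from the $BN$-pair axiom: if $\ell(ws)>\ell(w)$, then $B^FwB^F\cdot B^FsB^F=B^FwsB^F$. A count of cosets, $q_{ws}=q_wq_s$, then shows every coefficient equals $1$. (One must keep in mind the ``opp'' convention, which fixes the order of the product.) For the quadratic relation, $B^FsB^F\cdot B^FsB^F=B^F\cup B^FsB^F$, which reduces to a rank-one computation in the Levi subgroup $L_s$. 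There one finds: the coset $B^F$ arises exactly $q_s$ times, and each coset in $B^FsB^F$ arises $q_s-1$ times. These counts follow from $|L_s^F/B_s^F|=q_s+1$ and the double transitivity of $L_s^F$ on this set.

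\textbf{Step 3: generators and relations.} The first rule gives $T_w=T_{s_1}\cdots T_{s_k}$ for any reduced expression $w=s_1\cdots s_k$. Hence the $T_s$ generate $H$. Applying this to the two reduced words of the longest element of $\langle s,t\rangle$ gives the braid relations, and the quadratic relation is Step 2. So there is a surjection $\widetilde H\to H$, where $\widetilde H$ is the abstract algebra on the displayed presentation.

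\textbf{Step 4: injectivity.} Let $\widetilde T_w$ be the image of $T_{s_1}\cdots T_{s_k}$ in $\widetilde H$ for a reduced expression of $w$. By Matsumoto's theorem, any two reduced expressions are related by braid moves, so $\widetilde T_w$ is well defined. Using the quadratic relation, the span of the $\widetilde T_w$ is closed under right multiplication by every $T_s$. It contains $1$, so it is all of $\widetilde H$. Thus $\dim\widetilde H\le |W|=\dim H$, and the surjection is an isomorphism.

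The main obstacle I expect is Step 2: getting the coset multiplicities right (the values $q_s$ and $q_s-1$) for a possibly non-split finite reductive group. Here $q_s$ can be a power of $q$ other than $q$ itself. This requires the relative $BN$-pair of $G^F$ and the rank-one reduction to $L_s^F$; I would cite Geck--Jacon, Chapter 4, for these facts rather than rederive them.
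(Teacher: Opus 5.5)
Your proposal is correct. It is the standard Iwahori--Matsumoto argument: Frobenius reciprocity and the Bruhat decomposition give the basis, coset counting gives $T_wT_s=T_{ws}$ and the quadratic relation, and Matsumoto's theorem gives injectivity of the presented algebra; your handling of the opp convention and the counts $q_s$, $q_s-1$ are right. The paper gives no proof of its own here and simply imports the theorem from Geck--Jacon, Chapter~4, whose proof follows essentially the route you describe.
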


Finally, if we have a finite dimensional representation $V=\bigoplus_{i}V_i$ where each $V_i$ is irreducible, and we assume further that every irreducible component of $V$ is a subrepresentation of $\operatorname{Ind}_{B^F}^{G^F}\mathbf{1}$, then we can take the $B^F$-fixed vectors of $V$. We obtain an $H$-module $V^{B^F}=\bigoplus_i V_i^{B^F}$. It is well known that the Steinberg representation is a 1-dimensional representation on which $T_{\alpha}$ (where $\alpha$ is a simple root) acts as $-1$. Hence by Theorem 5.1, we obtain the following corollary:

\begin{corollary}
The multiplicity of the Steinberg representation in $V$ equals $\dim_\mathbb{C}\{ v \in V \mid T_\alpha v=-v \text{ for 
\ every \  simple \ root\ } \alpha\}$.
\end{corollary}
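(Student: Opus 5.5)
The corollary concerns a finite-dimensional $G^F$-representation $V=\bigoplus_i V_i$ whose irreducible constituents all lie in the principal series $\operatorname{Ind}_{B^F}^{G^F}\mathbf{1}$. The plan is to combine the bijection of Theorem 5.1 with the additivity of the functor $M\mapsto M^{B^F}$. We read $\{v\in V\mid T_\alpha v=-v\}$ as a subspace of $V^{B^F}$, with $T_\alpha$ acting through the identification $\operatorname{Hom}_{G^F}(\operatorname{Ind}_{B^F}^{G^F}\mathbf{1},V)=V^{B^F}$; the $T_\alpha$ are only defined on $B^F$-fixed vectors.

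First, take $B^F$-invariants, which is exact in characteristic zero. This gives an $H$-module decomposition $V^{B^F}=\bigoplus_i V_i^{B^F}$. Each summand $V_i^{B^F}=\Theta(V_i)$ is a simple $H$-module by Theorem 5.1, and it is nonzero by Frobenius reciprocity because $V_i\subset\operatorname{Ind}_{B^F}^{G^F}\mathbf{1}$. The common $(-1)$-eigenspace $E=\{v\in V^{B^F}\mid T_\alpha v=-v\ \forall\alpha\in\Delta\}$ is compatible with this decomposition, so
$E=\bigoplus_i E_i$ with $E_i=\{v\in V_i^{B^F}\mid T_\alpha v=-v\ \forall \alpha\}$.

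Second, show that $\dim E_i=1$ if $V_i\cong\mathrm{St}$ and $\dim E_i=0$ otherwise. The space $E_i$ is stable under all $T_\alpha$, hence under $H$ by the presentation in Theorem 5.2. So if $E_i\neq0$, it is an $H$-submodule of the simple module $V_i^{B^F}$, and therefore equals it. That module is then the one-dimensional sign character $\varepsilon:T_s\mapsto-1$. By the stated fact that $\mathrm{St}^{B^F}$ is one-dimensional with $T_\alpha=-1$, we have $\Theta(\mathrm{St})\cong\varepsilon$. Injectivity of the bijection in Theorem 5.1 then forces $V_i\cong\mathrm{St}$. Conversely, if $V_i\cong\mathrm{St}$, then $E_i=V_i^{B^F}$ is one-dimensional. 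Summing over $i$, $\dim E$ equals the number of $i$ with $V_i\cong\mathrm{St}$, which is the multiplicity.

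The only delicate point is the sign-character identification $\Theta(\mathrm{St})\cong\varepsilon$. I take it as the cited well-known fact. If it needed justification, one could use the quadratic relation $(T_s-q_s)(T_s+1)=0$ together with the fact that $\mathrm{St}$ occurs exactly once in $\operatorname{Ind}_{B^F}^{G^F}\mathbf{1}$ and corresponds to the alternating idempotent $\sum_w(-1)^{\ell(w)}q_w^{-1}T_w$ up to normalization. Everything else is formal.
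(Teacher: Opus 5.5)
Your proposal is correct and follows the same route as the paper: you decompose $V^{B^F}=\bigoplus_i V_i^{B^F}$ into simple $H$-modules, identify $\Theta(\mathrm{St})$ with the sign character $T_s\mapsto -1$, and invoke the bijection $\Theta$. You also make explicit a step the paper leaves implicit: a nonzero common $(-1)$-eigenspace inside a simple summand is an $H$-submodule, so it is the whole summand, so that summand is the sign module, and injectivity of $\Theta$ then forces $V_i\cong\mathrm{St}$.
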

We should also note that by definition, $q_{s_\alpha}=q_{\alpha+1}$ for a simple root $\alpha$, where $q_{\alpha+1}$ is the element defined in Section 4.2.

\section{Relations between finite Lie group and $p$-adic group}
Two Hecke algebras occur in the previous sections, so we will compare these two algebras in this section. To simplify terminology, let $H=\operatorname{End}_{K/K_{+}}(\operatorname{Ind}_{I/K_{+}}^{K/K_{+}}(1))$, and let $H(K,I)$ be the finite Hecke algebra of the $p$-adic group (i.e., $I$-bi-invariant functions which are supported on $K$).

\begin{lemma}
There is an isomorphism $\phi$ between $H$ and $H(K,I)$ satisfying $\phi(T_{s_\alpha})=\operatorname{ch}_{Is_{\alpha}I}$ for every simple root $\alpha$.
\end{lemma}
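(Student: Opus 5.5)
The plan is to realize both algebras as convolution algebras of bi-invariant functions and compare them through the projection $K \to K/K_+$. I will write $\bar K = K/K_+$ and $\bar B = I/K_+$, and take the model
\[
H=\operatorname{End}_{\bar K}\bigl(\mathbb{C}[\bar K/\bar B]\bigr)^{\mathrm{opp}} \cong \mathbb{C}[\bar B\backslash \bar K/\bar B],
\]
the algebra of $\bar B$-bi-invariant functions on $\bar K$ under convolution, normalized by the counting measure divided by $|\bar B|$. Under this identification the operator $T_n$ of Section~5, which sends $x\bar B$ to the sum of the cosets $y\bar B$ with $x^{-1}y\in \bar B n\bar B$, corresponds to the characteristic function $\operatorname{ch}_{\bar B n\bar B}$. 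On the $p$-adic side, $H(K,I)$ consists of $I$-bi-invariant functions supported on $K$, with Haar measure normalized so that $\operatorname{vol}(I)=1$.

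Since $K_+\subset I$, every $I$-bi-invariant function on $K$ is left and right $K_+$-invariant. Pulling back along $\pi:K\to\bar K$ therefore gives a linear bijection
\[
\phi:\mathbb{C}[\bar B\backslash \bar K/\bar B]\to H(K,I),\qquad f\mapsto f\circ\pi,
\]
because $I=\pi^{-1}(\bar B)$ and the double cosets $IkI$ in $K$ are exactly the preimages of the double cosets $\bar B\bar k\bar B$. For multiplicativity, I will compute $(f_1\circ\pi)*(f_2\circ\pi)(x)=\int_K f_1(\pi(y))f_2(\pi(y^{-1}x))\,dy$. This integral splits as a sum over cosets $yK_+$, each of volume $\operatorname{vol}(K_+)=\operatorname{vol}(I)/|\bar B|=1/|\bar B|$, and so equals the normalized finite convolution. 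This is the step where the normalizations must be matched; with $\operatorname{vol}(I)=1$ on one side and the factor $1/|\bar B|$ on the other, they agree, and $\phi$ is an algebra isomorphism.

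It then remains to identify the generators. By the Bruhat decomposition for $K$ relative to $I$, namely $K=\bigsqcup_{w\in W} IwI$ with $W$ realized by representatives in $N_K(S)$, the preimage of $\bar B s_\alpha\bar B$ is $Is_\alpha I$, where $s_\alpha$ also denotes a lift in $K$ of the simple reflection; this lift exists by \cite[Theorem 8.4.10]{KalethaPrasad2023}, since $I/I_+$ is the torus of $\bar K$. Hence $\phi(T_{s_\alpha})=\operatorname{ch}_{Is_\alpha I}$.

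As a consistency check I will verify that both sides satisfy the quadratic relation with the same parameter. The relation $(T_{s}-q_s)(T_s+1)=0$ uses $q_s=[\bar B s\bar B:\bar B]=[Is_\alpha I:I]=q_{\alpha+1}$, which matches the remark at the end of Section~5. I expect the main obstacle to be making the opp convention and the measure normalization precise. Concretely, I must check that the action $h*f=f\circ h$ corresponds to convolution in the correct order, and that no extra factor such as $|\bar B|$ survives. If the order comes out reversed, I will compose $\phi$ with the anti-involution $f\mapsto f(\cdot^{-1})$, which fixes each $\operatorname{ch}_{Is_\alpha I}$ because $s_\alpha^{-1}\in s_\alpha M_0$.
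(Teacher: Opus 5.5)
Your proof is correct, but it takes a different route from the paper. The paper's entire proof is the sentence that the lemma ``follows directly from the presentations of the two algebras.'' Both $H$ and $H(K,I)$ are Hecke algebras of the finite Weyl group $W$, generated by $T_{s_\alpha}$ resp.\ $\operatorname{ch}_{Is_\alpha I}$ subject to the braid relations and the quadratic relations. The parameters agree because $q_{s_\alpha}=q_{\alpha+1}=[Is_\alpha I:I]$ (the remark at the end of Section~5), so $T_{s_\alpha}\mapsto\operatorname{ch}_{Is_\alpha I}$ extends to an isomorphism.

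You instead construct the isomorphism directly as pullback along $\pi\colon K\to\bar K$. You use only $K_+\subset I$, the double-coset bijection $IkI=\pi^{-1}(\bar B\,\pi(k)\,\bar B)$, and $\operatorname{vol}(K_+)=\operatorname{vol}(I)/|\bar B|=1/|\bar B|$ to match the two convolutions.

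Your route gives a canonical map on the whole basis, $\operatorname{ch}_{\bar B\bar w\bar B}\mapsto\operatorname{ch}_{IwI}$. It needs neither the standard presentation of $H(K,I)$, which the paper invokes without stating, nor a separate check of the parameters. Indeed $[\bar B s_\alpha\bar B:\bar B]=[Is_\alpha I:I]$ falls out of the double-coset bijection, so your final consistency check is redundant. Your route also makes Lemma~6.2 essentially automatic for every basis element, not just the generators: $\operatorname{ch}_{IwI}$ acts on $V^I$ by $\int_{IwI}xv\,dx=\sum_{u\in IwI/I}uv$, which is visibly the finite-group operator $T_w$. The paper's route is shorter but rests on two quoted presentations.

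Your fallback via $f\mapsto f(\cdot^{-1})$ is harmless but not needed. Right convolution $R_f\colon\varphi\mapsto\varphi*f$ satisfies $R_{f_2}\circ R_{f_1}=R_{f_1*f_2}$. So the normalized convolution algebra is already isomorphic, order-preservingly, to $\operatorname{End}_{\bar K}(\mathbb{C}[\bar K/\bar B])^{\mathrm{opp}}$, which is exactly the paper's $H$ with its suppressed ``opp.''
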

\begin{proof}
This follows directly from the presentations of the two algebras.
\end{proof}

\begin{lemma}
Suppose $G$ is an unramified reductive $p$-adic group with hyperspecial subgroup $K$ and Iwahori subgroup $I$. Let $(\pi,V)$ be an irreducible Iwahori-spherical representation of $G$. As a representation of the $p$-adic group, $V$ has an action coming from $H(K,I)$. As a representation of the finite Lie group $K/K_{+}$, according to Section 5, after taking $I/K_{+}$-fixed vectors, $V^I$ is also an $H$-module. Then the two actions coincide.
\end{lemma}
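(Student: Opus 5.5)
Since $V^I=(V^{K_+})^{I/K_+}$, the plan is to make both actions explicit on this common space, in terms of averaging over cosets, and then compare them generator by generator. It suffices to check agreement on the generators $T_{s_\alpha}$ ($\alpha$ simple) of $H$ and their images $\phi(T_{s_\alpha})=\operatorname{ch}_{Is_\alpha I}$ under the isomorphism of Lemma~6.1. This uses that $\{\operatorname{ch}_{Is_\alpha I}\}$ generates $H(K,I)$, which follows from Lemma~6.1 and the presentation in Theorem~5.2. One should also check that both constructions define algebra actions of the same variance (left modules, with the ``opp'' convention of Section~5 fixed), so that agreement on generators implies agreement everywhere.

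\textbf{The $p$-adic side.} With the Haar measure normalized so that $\operatorname{vol}(I)=1$, the element $\operatorname{ch}_{IsI}$ acts on $v\in V^I$ by
\[
\operatorname{ch}_{IsI}*v=\int_{IsI}\pi(g)v\,dg=\sum_{x\in IsI/I}\pi(x)v .
\]
Because $K_+\subset I$ is normal in $K$, we have $IsI/I\cong \bar B s\bar B/\bar B$ with $\bar B=I/K_+$ and $\bar s$ the image of $s$ in $K/K_+$. The sum therefore becomes a finite sum $\sum_{y\in \bar B\bar s\bar B/\bar B}\bar\pi(y)v$ computed entirely inside the $K/K_+$-representation $V^{K_+}$.

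\textbf{The finite side.} Decompose $V^{K_+}$ into irreducibles. By Lemma~3.4 every constituent lies in the principal series, so Section~5 applies. A vector $v\in V^{\bar B}$ corresponds by Frobenius reciprocity to $f_v\in\operatorname{Hom}(\mathbb{C}[\bar G/\bar B],V^{K_+})$ with $f_v(x\bar B)=\bar\pi(x)v$. The action of $h=T_s$ is $f_v\circ T_s$, and evaluating at the identity coset gives
\[
(f_v\circ T_s)(\bar B)=f_v\Big(\sum_{y\bar B\subset \bar B\bar s\bar B}y\bar B\Big)=\sum_{y\in\bar B\bar s\bar B/\bar B}\bar\pi(y)v .
\]
This is exactly the expression obtained on the $p$-adic side. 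Since $\phi(T_s)=\operatorname{ch}_{IsI}$, the two actions agree on generators, and hence on all of $H\cong H(K,I)$.

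\textbf{Main obstacle.} The delicate step is the bookkeeping of conventions. One must check that the ``opp'' convention in Definition~5.1, together with $h*f=f\circ h$, yields a left module structure matching convolution. One must also check that the identification $IsI/I\cong \bar B\bar s\bar B/\bar B$ holds, i.e.\ that $I/K_+$ is the Borel subgroup of $K/K_+$ and $s\in K$ maps to the simple reflection (this is Lemma~3.4 together with Kaletha--Prasad). Finally, the Haar normalization must be matched so that $\phi$ sends $T_s$ to $\operatorname{ch}_{IsI}$ rather than to a scalar multiple of it. I would settle the first point by computing $T_sT_t$ on $f_v$ directly and comparing the result with $\operatorname{ch}_{IsI}*\operatorname{ch}_{ItI}$. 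In fact, the generator computation above gives the identity $\operatorname{ch}_{IwI}*v=T_w\cdot v$ for every $w\in W$, not only for simple reflections, so the question of variance can be bypassed entirely.
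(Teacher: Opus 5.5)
Your proposal is correct and follows the paper's proof. Both arguments reduce to the generators and identify $v\in V^I$ with $f_v$ via Frobenius reciprocity; then $(f_v\circ T_{s})(\bar B)$ and $\int_{IsI}\pi(x)v\,dx$ (with $\operatorname{vol}(I)=1$) both equal $\sum_{u\in IsI/I}\pi(u)v$. Your extra bookkeeping fills in points the paper leaves implicit: the identification $IsI/I\cong \bar B\bar s\bar B/\bar B$, and the observation that the same computation works for every $T_w$, which sidesteps the ``opp'' variance issue.
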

\begin{proof}
Here we emphasize that the measure on $G$ is normalized such that the volume of $I$ is 1. By Lemma 3.5, every irreducible component of $V^{K_{+}}$ is a subrepresentation of $\operatorname{Ind}_{I/K_{+}}^{K/K_{+}}(1)$.

In order to show that these two actions coincide, we just need to check the generators $T_{s_\alpha}$ and $\operatorname{ch}_{Is_\alpha I}$. More concretely, we need to show the following equality: for every element $v \in V^I$
$$
T_{s_\alpha} v = \operatorname{ch}_{Is_\alpha I} v.
$$
Assume that the representatives of $Is_{\alpha}I/I$ are $\{u \mid u \in \Lambda\}$.
Then
By the discussion of the action of $T_{s_\alpha}$ in Section 5, and after applying Frobenius reciprocity, we have:
$$
\operatorname{Hom}_{K/K_{+}}(\operatorname{Ind}_{I/K_{+}}^{K/K_{+}}(1),V^{K_{+}})=\operatorname{Hom}_{I/K_{+}}(1,V^{K_{+}})=V^I
$$
For an element $v \in V^I$, $v$ corresponding to the function $f_v$, where $f_v(gI/K_{+})=gv$, hence 
$$
T_{s_\alpha}v=f_v \circ T_{s_\alpha}(I/K_{+})
$$
$$
 = f_v\bigl(\sum_{u \in  \Lambda}uI/K_{+}\bigr).
$$
$$
= \sum_{u \in \Lambda} u v,
$$
$$
\text{RHS} = \int_{Is_{\alpha}I} x v \, dx = \sum_{u\in \Lambda} u v.
$$
Hence LHS = RHS.
\end{proof}

\section{The classification of irreducible representations containing Steinberg}

\subsection{Main Theorem}

\begin{theorem}
Suppose that $G$ is an unramified reductive $p$-adic group, $K$ is its hyperspecial subgroup, and $I$ is its Iwahori subgroup. If an irreducible representation $(\pi,V)$ of $G$ contains Steinberg, then it must be an Iwahori-spherical representation. Moreover, for every unramified principal series of $G$, there is exactly one subquotient that contains Steinberg, and we will give a concrete construction in the proof of the theorem.
\end{theorem}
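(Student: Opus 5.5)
The first assertion is immediate from Corollary~\ref{cor:steinberg-principal-series}. Lemma~\ref{lem:steinberg-iwahori} gives $V^I\neq 0$ for any $V$ containing Steinberg, and Theorem~\ref{thm:borel-casselman} then places $V$ inside some $I(\chi)$. So the real content is the existence and uniqueness statement for a fixed unramified principal series $I(\chi)$. I would translate the Steinberg condition into a linear-algebra condition on Iwahori-fixed vectors.

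For any Iwahori-spherical $V$, Lemma~3.5 says every constituent of $V^{K_+}$ lies in the principal series of $K/K_+$. Corollary~5.4 then says the Steinberg multiplicity in $V^{K_+}$ equals the dimension of the simultaneous $(-1)$-eigenspace of the $T_{s_\alpha}$, $\alpha\in\Delta$, on $V^I=(V^{K_+})^{I/K_+}$. By Lemma~6.2, $T_{s_\alpha}$ acts there as $\operatorname{ch}_{Is_\alpha I}$. Define the exact functor
\[
E(V)=\{v\in V^I \mid \operatorname{ch}_{Is_\alpha I}*v=-v \ \text{for all } \alpha\in\Delta\}.
\]
It is exact because $H(K,I)$ is semisimple (it is isomorphic to $H$ by Lemma~6.1, and $H$ is semisimple as a commutant of a representation of a finite group), and $E$ is the isotypic part for the sign character. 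Hence $\dim E$ is additive over composition series of $I(\chi)$, and the theorem reduces to showing $\dim E(I(\chi))=1$. Once this holds, exactly one composition factor has $E\neq0$, and that factor is the unique subquotient containing Steinberg.

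To compute $E(I(\chi))$ I would use Casselman's basis and the first formula of Corollary~\ref{cor:lansky-hecke}. Write $v=\sum_w c_w\phi_{w,\chi}$. For each simple $\alpha$, pair $w$ with $ws_\alpha$, taking $w\alpha\in\Phi^+$ so that $ws_\alpha\alpha\in\Phi^-$. On the two-dimensional span of $\phi_{w,\chi}$ and $\phi_{ws_\alpha,\chi}$, the operator $\operatorname{ch}_{Is_\alpha I}$ acts by the matrix determined by
\[
\phi_w\mapsto \phi_{ws_\alpha},\qquad \phi_{ws_\alpha}\mapsto q\phi_w+(q-1)\phi_{ws_\alpha},\qquad q=q_{\alpha+1}.
\]
Its eigenvalues are $q$ and $-1$. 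The $(-1)$-eigenspace is one-dimensional and given by the condition $c_{ws_\alpha}=-q^{-1}c_w$. Since $q(ws_\alpha)=q(w)q_{\alpha+1}$ when lengths add, these relations propagate along reduced words. They force $c_w=(-1)^{\ell(w)}q(w)^{-1}c_1$, and conversely this choice satisfies every relation consistently. So $E(I(\chi))$ is spanned by
\[
v_0=\sum_{w\in W}(-1)^{\ell(w)}[IwI:I]^{-1}\phi_{w,\chi},
\]
and $\dim E(I(\chi))=1$ for every $\chi$. Notably, this step never uses the affine generators; $\chi$ enters only through the identification of the basis.

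The explicit construction follows. Let $\langle v_0\rangle$ be the $G$-subrepresentation generated by $v_0$, and let $M\subset\langle v_0\rangle$ be the sum of all subrepresentations with $E(M)=0$. Then $\langle v_0\rangle/M$ has a unique irreducible quotient containing $v_0$'s image, since $I(\chi)$ has finite length. That quotient is the desired subquotient $\pi_\chi$. Composing with the independence of the composition factors from the $W$-orbit of $\chi$ gives the parametrization.

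The step I expect to be the main obstacle is making the reduction rigorous. Two points need care. First, the sign-eigenspace functor must be exact on all finite-length Iwahori-spherical modules, including non-semisimple ones. Second, Lemma~6.2 together with Corollary~5.4 must apply to subquotients of $I(\chi)$ and not only to irreducible ones. The finite-group side is semisimple, which should handle both points. Secondary care is needed in checking the sign and $q$-normalization conventions of the Lansky formula, since the eigenvalue must be $-1$ rather than $q$.
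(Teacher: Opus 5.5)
Your proposal is correct and follows the paper's proof essentially step for step. It uses the same reduction via Lemma~3.2, Lemma~3.5, Corollary~5.4 and Lemma~6.2, the same pairing $w\leftrightarrow ws_\alpha$ in Casselman's basis with Lansky's formula giving $c(w)+q_{\alpha+1}c(ws_\alpha)=0$, and the same vector $v_0$. Your exactness argument for $E$ (exactness of $I$-invariants plus semisimplicity of $H(K,I)$) makes explicit the additivity over composition factors that the paper uses silently when it passes from $\dim W=1$ in $I(\chi)$ to uniqueness of the subquotient. Your $\langle v_0\rangle/M$ coincides with the paper's $V/(V\cap U_{i-1})$; it is in fact already irreducible, because every nonzero subrepresentation of it has $E\neq 0$ and so contains the image of $v_0$ (this, not finite length alone, is the justification your last step needs).
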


\begin{proof}
The first statement follows from Corollary \ref{cor:steinberg-principal-series}.

We now prove the second statement. Suppose that $(\pi,V)$ is an Iwahori-spherical irreducible representation, and it is a subquotient of the unramified principal series $I(\chi)$.

$V^{K_{+}} \subset I(\chi)^{K_{+}}$ is a sub-$K/K_{+}$ representation. Since $V$ is a subquotient of $I(\chi)$, the space $V^{K_{+}}$ is a subquotient of $I(\chi)^{K_{+}}$. The latter is a principal series representation of the finite reductive group $K/K_{+}$; hence all its irreducible constituents lie in the principal Harish-Chandra series. Consequently, every irreducible component of $V^{K_{+}}$ lies in the principal series.

By the representation theory of finite groups of Lie type \cite{DigneMichel2020}, we can take the $I/K_{+}$ fixed points of $V^{K_{+}}$ (where $I/K_{+}$ is the Frobenius fixed point of a Borel subgroup of the finite Lie group $K/K_{+}$) to make it into a module of the Iwahori–Hecke algebra of the finite Lie group. So we need to study $V^I = (V^{K_{+}})^{I/K_{+}}$.

By Lemma 6.2, the two Hecke algebras have the same action; we can make $V^I$ an $H$-module of the finite Lie group.

In order to find the multiplicity of the Steinberg representation in $V^{K_{+}}$, we need to compute the dimension of the space
$$
W = \{ v \in V^I \mid \operatorname{ch}_{I s_\alpha I} v = -v \text{ for \ every \ simple \ root\  } \alpha \}.
$$

We use the basis mentioned in Section 4; suppose that $v = \sum_{w \in W} c(w) \phi_{w,\chi}$. We prove the following lemma:

\begin{lemma}
\label{lem:unique-eigenvector}
There is a unique (up to scalar) non-zero element of $I(\chi)$ that belongs to the space $W$.
\end{lemma}

\begin{proof}
For a fixed simple root $\alpha$, we can pair the elements of $W$ as $\{(w, w s_\alpha) \mid w\alpha \text{ is positive}\}$. According to Corollary \ref{cor:lansky-hecke}:
$$
\operatorname{ch}_{I s_\alpha I}(c(w)\phi_{w,\chi} + c(w s_\alpha)\phi_{w s_\alpha,\chi})
$$
$$
= (c(w) + c(w s_\alpha)q_{\alpha+1} - c(w s_\alpha))\phi_{w s_\alpha,\chi} + (c(w s_\alpha)q_{\alpha+1})\phi_{w,\chi}.
$$

Since $v$ lies in $W$, we see that
$$
-c(w) = c(w s_\alpha)q_{\alpha+1}
$$
and
$$
-c(w s_\alpha) = c(w) + c(w s_\alpha)q_{\alpha+1} - c(w s_\alpha).
$$

Hence we conclude that
$$
c(w) + q_{\alpha+1}c(w s_\alpha) = 0.
$$

Since $\alpha$ is arbitrary, we finish the uniqueness of the lemma via induction:

Let $c(1) = 1$. From the above, we obtain $c(s_\alpha) = -q^{-1}_{\alpha+1}$. Suppose we already know $c(w)$ for every $l(w) = m$. Now consider an element $u \in W$ with length $m+1$. There exists a simple root $\alpha$ such that $l(u s_\alpha)=l(u)-1$; then by the above argument $c(u)$ can be determined by $c(u s_\alpha)$, hence by the induction hypothesis we prove uniqueness.
In other words, $\dim W \le 1$.
Next, let us prove existence: set $c(w)=(-1)^{l(w)}([IwI:I])^{-1}$.
Note that the basic property of BN-pairs says:
If for $w\in W$ and a simple reflection $s_\alpha$ we have $l(w s_\alpha)=l(w)+1$, then $[IwI:I][Is_\alpha I:I]=[I w s_\alpha I:I]$.
This property tells us that there exists a non-zero element in $W$.
Since if $w \in W$ such that $w(\alpha)>0$, for a simple root $\alpha$. then $l(ws_\alpha)=l(w)+1$, using the above property we can see that $c(ws_\alpha)q_{\alpha+1}+c(w)=0$. Hence $\sum_{w \in W}(-1)^{l(w)}[IwI:I]^{-1}\phi_{w,\chi}$ is a non-zero element in W.
\end{proof}

Returning to the proof of our main theorem, we see that the dimension of $W$ is 1, and we conclude that there is only one subquotient of the series $I(\chi)$ which contains Steinberg.

Finally, we give a construction of this subquotient. Denote the element in $W$ by $v_0$. Let $V$ be the subrepresentation of $I(\chi)$ generated by $v_0$, and let $V_0$ be the maximal subrepresentation of $V$ which does not contain $v_0$. Let's us show that such $V_0$ exists:
Since $I(\chi)$ is of finite length, suppose that the Jordan-H\"{o}lder sequence of $I(\chi)$ is $$0\subset U_1 \subset U_2....\subset U_m=I(\chi)$$
And by the previous argument, we can assume that the irreducible subquotient $U_i/U_{i-1}$ contains a Steinberg. Then $v_0 \in U_i^I$, hence $V \subset U_i$, and since $v_0  \notin U_{i-1}^I$, that means $V \cap U_{i-1} \neq V$, and there exists a natural G-injection $V/(V\cap U_{i-1}) \xrightarrow{ }U_i/U_{i-1}$. Since $U_i/U_{i-1}$ is irreducible and $V/(V\cap U_{i-1}) \neq 0$, we can see that $V/(V\cap U_{i-1})$ is isomorphic to $U_{i}/U_{i-1}$. Hence we can take $V_0=V \cap U_{i-1}$, such $V_0$ exists. 
Then $V/V_0$ is the unique subquotient of $I(\chi)$ that contains the Steinberg representation.
\end{proof}
It is well known that two principal series $I(\chi)$ and $I(w\chi)$ have the same Jordan-H\"{o}lder factors (Where w is an element of the Weyl group). And if $\chi$ and $\chi'$ are not in the same $W$ -orbit, then $I(\chi)$ and $I(\chi')$ do not have a common Jordan-H\"{o}lder factor. Hence we get a corollary:
\begin{corollary}
    There is a bijection between $W$-orbits of unramified  characters and irreducible representations containing the Steinberg  in its hyperspecial subgroup.
\end{corollary}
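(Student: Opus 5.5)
The plan is to build the map $\Psi$ from $W$-orbits of unramified characters to irreducible representations containing Steinberg. To the orbit $W\chi$ we assign the unique irreducible subquotient $\pi_\chi$ of $I(\chi)$ whose $K_+$-fixed vectors contain Steinberg; the main theorem above guarantees that it exists and is unique. The argument then has three parts: well-definedness, surjectivity and injectivity. Each part reduces to the two facts quoted just before the corollary about Jordan--H\"older factors of principal series.

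\emph{Well-definedness.} For $w\in W$, the representations $I(\chi)$ and $I(w\chi)$ have the same Jordan--H\"older factors, counted with multiplicity. So $\pi_\chi$ is a subquotient of $I(w\chi)$ and contains Steinberg. Uniqueness in the main theorem, applied to $I(w\chi)$, then forces $\pi_{w\chi}\cong\pi_\chi$. The theorem is stated only for subquotients of a fixed $I(\chi)$, so this application is needed.

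\emph{Surjectivity.} Let $\pi$ be irreducible and contain Steinberg. By Corollary~\ref{cor:steinberg-principal-series} (equivalently, the first part of the main theorem combined with Theorem~\ref{thm:borel-casselman}), $\pi$ is a subquotient of some unramified principal series $I(\chi)$. Uniqueness in the main theorem then gives $\pi\cong\pi_\chi=\Psi(W\chi)$.

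\emph{Injectivity.} Suppose $\pi_\chi\cong\pi_{\chi'}$. Then $I(\chi)$ and $I(\chi')$ share a Jordan--H\"older factor, so by the disjointness statement $\chi$ and $\chi'$ lie in the same $W$-orbit.

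The main obstacle is that the two facts about Jordan--H\"older factors are only cited as well known. If I had to justify them, I would compare Jacquet modules. Via the geometric lemma, the semisimplified Jacquet module of $I(\chi)$ with respect to $B$ is $\bigoplus_{w\in W} w\chi$, up to the modular twist coming from unnormalized induction. This expression depends only on the orbit $W\chi$. Frobenius reciprocity then shows that every irreducible subquotient $\sigma$ of $I(\chi)$ has nonzero Jacquet module, with exponents in $W\chi$. So a common factor of $I(\chi)$ and $I(\chi')$ forces $W\chi\cap W\chi'\neq\emptyset$. Equality of the composition factors of $I(\chi)$ and $I(w\chi)$ is the subtler direction. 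It follows from the standard equality of their characters in the Grothendieck group; the cleanest route is the Bernstein--Zelevinsky result that induced representations from $W$-conjugate data have the same semisimplification. I would cite this rather than reprove it.
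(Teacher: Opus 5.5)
\textbf{The gap: normalization of $I(\chi)$.} Your outline is exactly the paper's argument spelled out. You define $W\chi\mapsto\pi_\chi$ via the main theorem, get surjectivity from Corollary~\ref{cor:steinberg-principal-series} plus uniqueness, and get well-definedness and injectivity from the two Jordan--H\"older facts. The gap is in those two facts as you use them. The paper's $I(\chi)=\operatorname{Ind}_B^G\chi$ is \emph{unnormalized}, and in that convention both facts fail for the ordinary action of $W$. Your Jacquet-module sketch shows this once the twist is actually carried through. Writing $i_B^G$ for normalized induction, $I(\chi)=i_B^G(\chi\delta_B^{-1/2})$, so the semisimplified normalized Jacquet module $r_B(I(\chi))$ is $\bigoplus_{w\in W}w(\chi\delta_B^{-1/2})$. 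This is governed by the orbit of $\chi\delta_B^{-1/2}$, not by $W\chi$, because $W$ moves $\delta_B^{1/2}$.

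\textbf{Counterexample.} Take $G=SL_2(F)$. Both $I(\mathbf 1)=i_B^G(\delta_B^{-1/2})$ and $I(\delta_B)=i_B^G(\delta_B^{1/2})$ have composition factors $\mathbf 1$ and $\mathrm{St}_G$, so $\pi_{\mathbf 1}\cong\pi_{\delta_B}\cong\mathrm{St}_G$. For the nontrivial $s\in W$, however, $I(s\delta_B)=I(\delta_B^{-1})=i_B^G(\delta_B^{-3/2})$ is irreducible with a $K$-fixed vector, so $\pi_{\delta_B^{-1}}\not\cong\mathrm{St}_G$. Hence $W\chi\mapsto\pi_\chi$ is not well defined on $\{\delta_B,\delta_B^{-1}\}$. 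It is not injective either, since $W\mathbf 1=\{\mathbf 1\}\neq W\delta_B$.

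\textbf{The repair.} The fix is cheap and keeps your three steps. Send $W\mu$ to the Steinberg-containing factor of $i_B^G\mu=I(\mu\delta_B^{1/2})$; the main theorem applies because $\mu\delta_B^{1/2}$ is again unramified. Equivalently, keep $I(\chi)$ but let $W$ act by $w\cdot\chi=w(\chi\delta_B^{-1/2})\delta_B^{1/2}$. For normalized induction both facts are true: equal semisimplifications by Bernstein--Zelevinsky, and disjointness via exponents. Since $\mu\mapsto\mu\delta_B^{1/2}$ is a bijection of unramified characters intertwining the two actions, the corollary as stated survives.

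\textbf{A smaller point.} Frobenius reciprocity by itself only gives $r_B(\sigma)\neq 0$ when $\sigma$ is a subrepresentation, not an arbitrary composition factor of $I(\chi)$. For general $\sigma$, use Theorem~\ref{thm:borel-casselman} ($\sigma^I\neq0$) together with Borel's isomorphism $\sigma^I\cong(\sigma_N)^{M_0}$.

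The paper's one-line justification of the corollary makes the same normalization slip. This is therefore a correction to both arguments rather than a departure from the paper.
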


Remark:   In Theorem~3.4 of~\cite{Solleveld2012}, Professor Solleveld established that the condition used by Prasad is equivalent to the representation being generic.(To see that one has to know as well the reduction steps from the K-action on K+-invariant vector to modules over H(K,I), which is contained in this article. )
\section{Another Perspective: generic representation}
\begin{theorem}
The representation constructed above is the unique generic subquotient in the unramified principal series $I(\chi)$.
\end{theorem}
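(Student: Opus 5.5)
We must show that the subquotient $V/V_0$ constructed in the main theorem, characterized as the unique irreducible subquotient of $I(\chi)$ whose $K_+$-invariants contain the Steinberg representation, is also the unique generic irreducible subquotient of $I(\chi)$. The plan is to reduce genericity to a condition on Iwahori-fixed vectors, the same kind of condition the paper already solved for the Steinberg question, and then compare the two conditions.

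\textbf{Step 1 (existence and uniqueness of the generic constituent).} We fix a nondegenerate character $\psi$ of $N(F)$ of conductor adapted to $K$, meaning $\psi$ is trivial on $N\cap K$ but nontrivial on each $U_{-\alpha+1}$ for $\alpha\in\Delta$; up to $T(F)$-conjugation this is always possible. By exactness of the twisted Jacquet functor $J_{N,\psi}$ and Rodier's theorem, $\dim J_{N,\psi}(I(\chi))=1$, computed via the Bruhat filtration where only the open cell $Bw_0B$ contributes. Hence exactly one irreducible subquotient $\sigma$ of $I(\chi)$ is generic. It therefore suffices to show that $\sigma$ contains Steinberg; uniqueness in the main theorem then forces $\sigma\cong V/V_0$.

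\textbf{Step 2 (Whittaker functionals detect the sign character).} We let $\Lambda$ be the Jacquet integral Whittaker functional on $I(\chi)$, analytically continued in $\chi$ and normalized to be nonzero. It factors through $\sigma$ and is nonzero on $\sigma$. Because $\sigma$ is generated by $\sigma^I$ (Borel–Casselman, Theorem~\ref{thm:borel-casselman}), $\Lambda$ is nonzero on the image of $I(\chi)^I$. We then use the standard averaging identity. The functional $\lambda(v)=\Lambda(\pi(g_0)v)$, with $g_0$ chosen so that $\psi$ has exactly the conductor above, restricted to $I$-fixed vectors satisfies $\lambda(\operatorname{ch}_{Is_\alpha I}v)=-\lambda(v)$ for every simple root $\alpha$. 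The reason is that averaging $\psi$ over $U_{-\alpha+1}/U_{-\alpha+2}$, or over the appropriate quotient of $U_\alpha$ after conjugation, gives $-1$ on the nontrivial $I$-cosets and $q_{\alpha+1}$ on the trivial one, so that $\sum_{u}\psi(u)=-1$. In other words, $\lambda$ factors through the sign-character quotient of $V^I$ as an $H(K,I)$-module.

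\textbf{Step 3 (from sign-quotient to Steinberg).} By Lemma~6.2 the $H(K,I)$-action on $\sigma^I$ coincides with the finite Hecke algebra action coming from $K/K_+$. The finite Hecke algebra $H$ is semisimple, since $q$ is invertible in $\mathbb{C}$. A nonzero functional on $\sigma^I$ on which $H$ acts through the sign character therefore gives a nonzero sign-isotypic vector in $\sigma^I$. By Corollary~5.3 this means $\sigma^{K_+}$ contains the Steinberg representation. Combining with Step~1 and the main theorem, $\sigma$ is the constructed subquotient. As a consistency check, the pairing $\lambda(v_0)$ can be compared with $\lambda(\phi_{w_0,\chi})$, since the open cell supports the Whittaker functional, and the coefficient of $\phi_{w_0,\chi}$ in $v_0$ is $\pm q(w_0)^{-1}\neq0$.

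\textbf{Main obstacle.} The delicate step is Step~2: making the sign-character identity precise for a general unramified, possibly non-split, group. This requires choosing $g_0$, checking that $\psi$ restricted to each relevant quotient $U_{\psi}/U_{\psi+1}$ is a nontrivial character of the residue-field group (of order $q_{\alpha+1}$), and verifying that $\Lambda\circ\pi(g_0)$ stays nonzero on $\sigma^I$, not merely on $\sigma$. I would first handle $\mathrm{SL}_2$ and the rank-one unramified unitary group explicitly. I would then reduce to these cases using the fact that $\operatorname{ch}_{Is_\alpha I}$ only involves the rank-one Levi attached to $\alpha$.
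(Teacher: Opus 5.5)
Your route differs from the paper's. The paper goes ``contains Steinberg $\Rightarrow$ regular geometric wavefront set'' (Okada, Example~2.3.8) $\Rightarrow$ generic (Moeglin--Waldspurger), and then quotes uniqueness of the generic constituent. You argue ``$\psi$-generic $\Rightarrow$ contains Steinberg'' and invoke the uniqueness from the main theorem. That direction could even identify the relevant Whittaker datum, but as written it has two genuine gaps.

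\textbf{First gap: the Whittaker datum is wrong.} Your $\psi$ is trivial on $N\cap K$. (The condition on $U_{-\alpha+1}\subset N^-$ is not a condition on a character of $N$; presumably you mean $U_{\alpha-1}$.) You then assert that the needed normalization is reachable ``up to $T(F)$-conjugation''. But replacing $\Lambda$ by $\Lambda\circ\pi(g_0)$ only replaces $(N,\psi)$ by a $G(F)$-conjugate pair, which never changes the $T(F)$-orbit of $\psi$, and for non-adjoint $G$ the orbit matters.

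Take $G=\mathrm{SL}_2$ and $\chi=\delta_B^{1/2}\eta$, with $\eta$ the unramified quadratic character of $T(F)$. Then $I(\chi)=\pi_1\oplus\pi_2$ with $\pi_1$ spherical. The Casselman--Shalika value $1+q^{-1}\neq 0$ shows that $\pi_1$ is the $\psi$-generic constituent for every $\psi$ of conductor exactly $\mathcal{O}$. Yet $\pi_1^{K_+}$ is the trivial representation of $\mathrm{SL}_2(\mathbb{F}_q)$ and $\pi_2^{K_+}\cong\mathrm{St}$. On $\pi_1^I$ the operator $\operatorname{ch}_{Is_\alpha I}$ acts by $q$, so your Step~2 identity fails there for every $g_0$, and your Steps~1--3 would prove a false statement.

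The datum that works is one whose restriction to $N\cap K$ is inflated from a nondegenerate character $\bar\psi$ of $N(\mathbb{F}_q)$: nontrivial on $U_\alpha/U_{\alpha+1}$ and trivial on $U_{\alpha+1}$ for $\alpha\in\Delta$. For $\mathrm{SL}_2$ this lies in a different $T(F)$-orbit from yours. The same example shows that both constituents are generic for suitable characters, so ``the unique generic subquotient'' only makes sense once this datum is fixed; the paper's wavefront-set argument passes over this too.

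Your averaging heuristic is also off. A nontrivial character summed over all of $U_\alpha/U_{\alpha+1}$ gives $0$. The value $-1$ only appears after translating by $w_0$ and separately showing that the identity coset contributes nothing.

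\textbf{Second gap: nonvanishing on $\sigma^I$.} That $\sigma$ is generated by $\sigma^I$ only gives $\Lambda(\pi(g)v)\neq 0$ for some $g\in G$ and some $v\in\sigma^I$. Writing $g=ntk$ reduces this to $\Lambda\circ\pi(t)$, which is a Whittaker functional for $\psi^{t}$. Its conductor on the simple root groups has shifted, so you lose exactly the normalization Step~2 needs. Indeed, for the correct datum $\Lambda$ itself vanishes on $\sigma^I$, since $\psi$ is nontrivial on $N\cap I$.

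This nonvanishing is the real content of the statement, and your proposal only flags it. It is essentially the Barbasch--Moy theorem. It can also be obtained from Chan--Savin's description $(\operatorname{ind}_N^G\psi)^I\cong\mathcal{H}(G,I)\otimes_{H(K,I)}\mathrm{sgn}$ combined with Bernstein's equivalence for the Iwahori block.

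Once you know the $\psi$-Whittaker functional is nonzero on $\sigma^{K_+}$, it restricts to a $\bar\psi$-Whittaker functional for $K/K_+$. Since $\mathrm{St}$ is the only $\bar\psi$-generic constituent of $\operatorname{Ind}_{B(\mathbb{F}_q)}^{G(\mathbb{F}_q)}\mathbf{1}$, you are then done without any rank-one computation. Your Step~3 (semisimplicity turning a sign-equivariant functional into a sign vector) is fine.
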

\begin{proof}
It is well known that there is exactly one subquotient of $I(\chi )$ is generic. And the work of Moeglin and Waldspurger \cite{MoeglinWaldspurger1987} told us an irreducible representation $(\pi,V)$ is generic if and only if the Wavefront set of $\pi$ is the regular orbit.
From Example 2.3.8 of Okada's paper \cite{Okada2022} we can see that if $V^{K_+}$ contains Steinberg, then the geometry wavefront set of this representation must be the regular nilpotent orbit. Hence the representation that we construct above is the unique generic subquotient.
\end{proof}

\end{document}